\providecommand{\tabularnewline}{\\}
\newcommand{\lyxdot}{.}
\numberwithin{equation}{section}
\numberwithin{figure}{section}
\theoremstyle{plain}
\newtheorem{thm}{\protect\theoremname}
  \theoremstyle{plain}
  \newtheorem{lem}[thm]{\protect\lemmaname}
  \theoremstyle{definition}
  \newtheorem{defn}[thm]{\protect\definitionname}
  \theoremstyle{plain}
  \newtheorem{prop}[thm]{\protect\propositionname}
  \theoremstyle{plain}
  \newtheorem{cor}[thm]{\protect\corollaryname}
  \providecommand{\corollaryname}{Corollary}
  \providecommand{\definitionname}{Definition}
  \providecommand{\lemmaname}{Lemma}
  \providecommand{\propositionname}{Proposition}
\providecommand{\theoremname}{Theorem}
\begin{document}

\begin{frontmatter}{}

\title{\textcolor{black}{Accounting for the Role of Long Walks on Networks
via a New Matrix Function}}

\author{\textcolor{black}{Ernesto Estrada and Grant Silver}}

\address{\textcolor{black}{Department of Mathematics \& Statistics, University
of Strathclyde, 26 Richmond Street, Glasgow G1 1XQ, UK}}
\begin{abstract}
\textcolor{black}{We introduce a new matrix function for studying
graphs and real-world networks based on a double-factorial penalization
of walks between nodes in a graph. This new matrix function is based
on the matrix error function. We find a very good approximation of
this function using a matrix hyperbolic tangent function. We derive
a communicability function, a subgraph centrality and a double-factorial
Estrada index based on this new matrix function. We obtain upper and
lower bounds for the double-factorial Estrada index of graphs, showing
that they are similar to those of the single-factorial Estrada index.
We then compare these indices with the single-factorial one for simple
graphs and real-world networks. We conclude that for networks containing
chordless cycles\textemdash holes\textemdash the two penalization
schemes produce significantly different results. In particular, we
study two series of real-world networks representing urban street
networks, and protein residue networks. We observe that the subgraph
centrality based on both indices produce significantly different ranking
of the nodes. The use of the double factorial penalization of walks
opens new possibilities for studying important structural properties
of real-world networks where long-walks play a fundamental role, such
as the cases of networks containing chordless cycles.}
\end{abstract}
\begin{keyword}
\textcolor{black}{matrix error functions; matrix tanh function; communicability
functions; double-factorial; chordless cycles; complex networks}
\end{keyword}

\end{frontmatter}{}

\section{\textcolor{black}{Introduction}}

\textcolor{black}{The study of large graphs and networks has become
an important topic in applied mathematics, computer sciences and beyond
\cite{Newman review,EstradaBook}. The role played by such large graphs
in representing the structural skeleton of complex systems\textemdash ranging
from social to ecological and infrastructural ones\textemdash has
triggered the production of many indices that try to quantify the
different structural characteristics of these networks \cite{EstradaBook,LucianoReview}.
Among those mathematical approaches used nowadays for studying networks,
matrix functions \cite{Function of matrices} of adjacency matrices
of graphs have received an increasing visibility due to their involvement
in the so-called }\textit{\textcolor{black}{communicability functions}}\textcolor{black}{{}
\cite{Estrada Higham,Communicability,Protein folding,Subgraph Centrality,Estrada Hatano Benzi,Katz centrality,Estrada index,Estrada index 1,Estrada index 2,Natural connectivity 1,Natural connectivity 2,Benzi 1,Benzi 2,Benzi 3,Ejov_1,Ejov_2}.
These functions characterize how much information flows between two
different nodes of a graph by accounting for a weighted sum of all
the routes connecting them. Here, a route is synonymous with a walk
connecting two nodes, which is a sequence of (not necessarily distinct)
consecutive vertices and edges in the graph. Then, the communicability
function between the nodes $p$ and $q$ is defined by the $p,q$-entry
of the following function of the adjacency matrix (see \cite{Estrada Higham,Estrada Hatano Benzi}
and references therein)}

\textcolor{black}{
\begin{equation}
G=\sum_{k=0}^{\infty}c_{k}A^{k},\label{eq:general matrix function}
\end{equation}
where the coefficients $c_{k}$ are responsible of giving more weight
to shorter walks. The most popular of these communicability functions
is the one derived from the scaling of $c_{k}=\frac{1}{k!}$, which
gives rise to the exponential of the adjacency matrix (see further
for definitions). This function, and the graph-theoretic invariants
derived from it, have been widely applied in practical problems covering
a wide range of areas. Just to mention a few, the communicability
function is used for studying real-world brain networks and the effects
of diseases on the normal functioning of the human brain \cite{Brain networks,brain networks_2}.
On the other hand, the so-called subgraph centrality \cite{Subgraph Centrality}\textemdash a
sort of self-communicability of a node in a graph\textemdash has been
used to detect essential proteins in protein-protein interaction networks
\cite{PIN_1,PIN_2}. The network bipartivity\textemdash a measure
derived from the use of the self-communicability\textemdash has found
applications ranging from detection of cracks in granular material
\cite{Tordesillas}, to the stability of fullerenes \cite{fullerene},
and transportation efficiency of airline networks \cite{airlines}.}

\textcolor{black}{A typical question when studying the structural
indices derived from (\ref{eq:general matrix function}) when using
$c_{k}=\frac{1}{k!}$ is whether or not we are penalizing the longer
routes in the graph too heavily (see Preliminaries for formal definitions)
\cite{Zooming in and out}. To understand this problem let us consider
the communicability function between the nodes $p$ and $q$ in the
graph: }

\textcolor{black}{
\begin{equation}
G_{pq}=\sum_{k=0}^{\infty}c_{k}\left(A^{k}\right)_{pq},\label{eq:general centrality}
\end{equation}
where $\left(A^{k}\right)_{pq}$ gives the number of routes of length
$k$ between these two nodes. Then, when we use $c_{k}=\frac{1}{k!}$
a route of length 2 is penalized by 1/2 and a walk of length 3 is
penalized by 1/6. However, a walk of length 5 is already penalized
by $1/120\approx0.008$, which could be seen as a very heavy penalization
for a relatively short walk between these two nodes. This means that
the longer walks connecting two nodes make a little contribution to
the communicability function. If we consider the function accounting
for the self-returning walks starting (and ending) at a given node
$G_{pp}$, a heavy penalization of longer walks means that this index
is mainly dependent on the degree of the corresponding node, i.e.,
the number of edges incident to it. That is, }

\textcolor{black}{
\begin{equation}
G_{pp}=1+c_{2}k_{p}+\sum_{k=3}^{\infty}c_{k}\left(A^{k}\right)_{pp},
\end{equation}
where $k_{p}$ is the degree of the node $p$. Then, the main question
here is to study whether using coefficients $c_{k}$ that do not penalize
the longer walks as heavily will reveal some structural information
of networks which is important in practical applications of these
indices.}

\textcolor{black}{Here we consider the use of a double-factorial penalization
1/$k!!$ \cite{DF} of walks as a way to increase the contribution
of longer walks in communicability-based functions for graphs and
real-world networks. The goal of this paper is two-fold. First, we
want to investigate whether this new penalization of walks produces
structural indices that are significantly different from the ones
derived from the factorial penalization. The other goal is to investigate
whether the information contained in longer walks is of significant
relevance for describing the structure of graphs and real-world networks.
While in the first case we can obtain analytical results that account
for the similarities and differences among the two penalization schemes,
in the second case we need to use some kind of indirect inference.
That is, we aim to explore some practical applications of the indices
derived from these two schemes and show whether or not there are significant
advantages when using one or the other for solving such practical
problems. In the current work we have strong evidences that the contributions
of long walks in networks is very important for such graphs containing
chordless cycles\textemdash also known as holes. In particular we
have considered a centrality index based on single- as well as on
double-factorial penalization of the walks, and observed that there
are significant differences in the ranking of the nodes when the graphs
contain such kind of topological features. Chordless cycles are ubiquitous
in certain scenarios, such as urban street networks and protein residue
networks, which are both studied here. In addition, these chordless
cycles are undesired features in certain networks like sensor networks,
mobile phone networks and other communication systems, where they
represent zones of no coverage of the signals.}

\section{\textcolor{black}{Preliminaries}}

\textcolor{black}{We consider in this work simple, undirected and
connected graphs $G=(V,E)$ with $n$ nodes (vertices) and $m$ edges.
A }\textit{\textcolor{black}{walk}}\textcolor{black}{{} of length $k$
in $G$ is a set of nodes $i_{1},i_{2},\ldots,i_{k},i_{k+1}$ such
that for all $1\leq l\leq k$, $(i_{l},i_{l+1})\in E$. A }\textit{\textcolor{black}{closed
walk}}\textcolor{black}{{} is a walk for which $i_{1}=i_{k+1}$. Let
$A$ be the adjacency operator on $\ell_{2}(V)$, namely $(Af)(p)=\sum_{q:{\rm dist}(p,q)=1}f(q)$.
For simple finite graphs $A$ is the symmetric adjacency matrix of
the graph. In the particular case of an undirected network as the
ones studied here, the associated adjacency matrix is symmetric, and
thus its eigenvalues are real. We label the eigenvalues of $A$ in
non-increasing order: $\lambda_{1}\geq\lambda_{2}\geq\ldots\geq\lambda_{n}$.
Since $A$ is a real-valued, symmetric matrix, we can decompose it
as }

\textcolor{black}{
\begin{equation}
A=U\Lambda U^{T},\label{eq:spectral decomposition}
\end{equation}
where $\Lambda$ is a diagonal matrix containing the eigenvalues of
$A$ and $U$ is the matrix containing the orthonormalized eigenvectors
$\mathbf{\overrightarrow{\psi}}_{i}$ associated with $\lambda_{i}$
as its columns. The graphs considered here are connected, therefore
$A$ is irreducible and from the Perron-Frobenius theorem we can deduce
that $\lambda_{1}>\lambda_{2}$ and that the leading eigenvector $\mathbf{\overrightarrow{\psi}}_{1}$,
which will sometimes be referred to as the }\textit{\textcolor{black}{Perron
vector}}\textcolor{black}{, can be chosen such that its components
$\mathbf{\mathbf{\overrightarrow{\psi}}}_{1,u}$ are positive for
all $u\in V$.}

\textcolor{black}{The degree of a node is the number of edges incident
to that node. The graph density is defined as }

\textcolor{black}{
\begin{equation}
d=\dfrac{2m}{n\left(n-1\right)},
\end{equation}
where $m$ is the number of edges in the graph. }

\textcolor{black}{The so-called 'exponential' communicability function
\cite{Communicability,Estrada Higham,Estrada Hatano Benzi} is defined
for a pair of nodes $p$ and $q$ on $G$ as}

\textcolor{black}{
\begin{equation}
G_{pq}=\sum_{k=0}^{\infty}\frac{\left(A^{k}\right)_{pq}}{k!}=\left(\exp\left(A\right)\right)_{pq}=\sum_{j=1}^{n}e^{\lambda_{j}}\mathbf{\mathbf{\overrightarrow{\psi}}}_{j,p}\mathbf{\overrightarrow{\psi}}_{j,q}.
\end{equation}
}

\textcolor{black}{The $G_{pp}$ terms of the communicability function
characterize the degree of participation of a node in all subgraphs
of the network, giving more weight to the smaller ones. Thus, it is
known as the }\textit{\textcolor{black}{subgraph centrality}}\textcolor{black}{{}
of the corresponding node \cite{Subgraph Centrality}. The global
structural index defined by}

\textcolor{black}{
\begin{equation}
EE\left(G\right)=tr\left(\exp\left(A\right)\right)=\sum_{j=1}^{n}e^{\lambda_{j}},
\end{equation}
is known as the }\textit{\textcolor{black}{Estrada index}}\textcolor{black}{{}
of the graph. The indices have been generalized by the use of a parameter
$\beta$ in the matrix function following the work of \cite{Temperature}.}

\textcolor{black}{
\begin{equation}
G_{pq}\left(\beta\right)=\sum_{k=0}^{\infty}\frac{\left(\beta^{k}A^{k}\right)_{pq}}{k!}=\left(\exp\left(\beta A\right)\right)_{pq}.\label{eq:general}
\end{equation}
}

\section{\textcolor{black}{Double-Factorial Penalization of Network Walks}}

\textcolor{black}{Let us start this section by recalling what the
double-factorial is. Let $k$ be a positive integer, then the double-factorial
$k!!$ is defined by}

\textcolor{black}{
\begin{equation}
k!!=\left\{ \begin{array}{cc}
k(k-2)(k-4)...3.1 & k\text{ odd }\\
k(k-2)(k-4)...4.2 & k\text{ even}\\
1 & k=-1,0.
\end{array}\right.
\end{equation}
}

\textcolor{black}{As a variation of the factorial $k!$ the double-factorial
appears very suitable for use as the penalization factor of the number
of walks of length $k$ in the definition of communicability functions.
Other functions have been used in the past for changing the heavy
penalization imposed by the single factorial. For instance, the use
of $c_{k}=\alpha^{k}$ where $0<\alpha<\lambda_{1}^{-1}$, has been
used since the introduction of the Katz index in 1953 \cite{Katz centrality}.
It is well-known that in this case the Eq. (\ref{eq:general}) converges
to the resolvent of the adjacency matrix, i.e., $\left[\left(I-\alpha A\right)^{-1}\right]_{pq}$.
Another choice of the coefficient $c_{k}$ in the Eq. (\ref{eq:general centrality})
is to consider $1/\left(k-t\right)!$ for some $t>0$ \cite{Zooming in and out}.
In this case the function (\ref{eq:general centrality}) converges
to \cite{Zooming in and out}:}

\textcolor{black}{
\begin{equation}
\left[A^{t}\left(I+Ae^{A}-e^{A}\right)\right]_{pq}.
\end{equation}
}

\textcolor{black}{The main problem with the two functions previously
mentioned is that they are parametric. In the first case we should
select the parameter $\alpha$ that is more appropriate for each individual
problem. It should be noticed that for very big networks, where $\lambda_{1}\gg1,$
the range of this parameter is very narrow leaving very little choice
for its variation. In the second case we also need to select the parameter
$t$ for each particular problem. Then, our consideration here is
the selection of a penalization which is not as heavy as the single
factorial but that does not contain any parameter. To see the main
differences and similarities with the other penalization discussed
before let us consider the terms $A^{k}/k!!$, where every walk of
length $k$ is penalized by $1/k!!$ and let us compare it with the
penalization of $1/k!,$ $\alpha^{k}$ and $1/\left(k-t\right)!$.
To give a simple example we consider a graph having $n=10$ nodes
and $m=40$ edges and show in Figure \ref{Figure3.11} the values
of $c_{k}\cdot tr\left(A^{k}\right)$ for $1\leq k\leq300$. As can
be seen in Figure \ref{Figure3.11} there are significant difference
among the three kinds of penalization of walks. The single factorial,
$\left(k-10\right)!$ and $0.01^{k}$ all display very similar behaviour,
with very quick decay for relatively small values of $k$. The use
of $0.1^{k}$ shows a smoother decay with the increase of $k$ (notice
that the plot is semi-log scale). However, for values $1\leq k\leq250$
the double-factorial penalizes the walks less heavily than this modified
factorial. As can be seen from this Figure, the double-factorial does
not penalize the long walks as heavily as the other penalization coefficients,
which may retain some important structural information of graphs and
networks. Hereafter, we will concentrate our analysis and comparison
between the double and the single-factorial penalization of walks
in graphs/networks.}

\begin{figure}
\begin{centering}
\textcolor{black}{\includegraphics[width=0.75\textwidth]{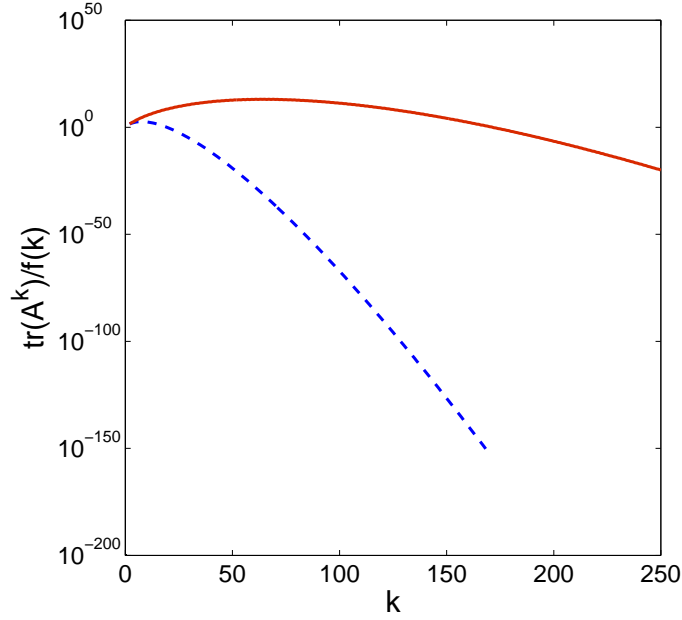}}
\par\end{centering}
\caption{Comparison of the decay of $c_{k}\cdot tr\left(A^{k}\right)$ for
different coefficients $c_{k}$ (see text for discussion). The $y$-axis
is expressed in logarithmic form.}

\label{Figure3.11}
\end{figure}

\textcolor{black}{As we are interested in defining matrix functions
that allow us to calculate several graph invariants, we start by proving
the following result.}
\begin{lem}
\textcolor{black}{Let $A$ be the adjacency matrix of a simple graph
$G=\left(V,E\right)$. Then}

\textcolor{black}{
\begin{equation}
\sum_{k=0}^{\infty}\frac{A^{k}}{k!!}=\frac{1}{2}\left[\sqrt{2\pi}\textrm{erf}\left(\frac{A}{\sqrt{2}}\right)+2I\right]\textrm{exp}\left(\frac{A^{2}}{2}\right),\label{eq:matrix function}
\end{equation}
where $I$ is the identity matrix and $\text{erf}(A)$ is the matrix
error function of $A$ \cite{erf}.}
\end{lem}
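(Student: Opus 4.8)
The plan is to reduce the matrix identity to a scalar power-series identity, since $A$ is symmetric and diagonalizable as $A = U\Lambda U^T$. Any analytic function $f$ applied to $A$ satisfies $f(A) = U f(\Lambda) U^T$, so it suffices to prove the claimed identity for a single real variable $x$ in place of $A$; the matrix version then follows by applying $f$ eigenvalue-by-eigenvalue. Concretely, I would establish the scalar identity
\begin{equation}
\sum_{k=0}^{\infty}\frac{x^{k}}{k!!}=\frac{1}{2}\left[\sqrt{2\pi}\,\textrm{erf}\left(\frac{x}{\sqrt{2}}\right)+2\right]\exp\left(\frac{x^{2}}{2}\right),\nonumber
\end{equation}
and then substitute $x \mapsto A$ using the spectral decomposition (\ref{eq:spectral decomposition}).

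First I would split the left-hand sum according to the parity of $k$, writing $\sum_{k=0}^{\infty} x^k/k!! = \sum_{j=0}^{\infty} x^{2j}/(2j)!! + \sum_{j=0}^{\infty} x^{2j+1}/(2j+1)!!$. The key computational step is to simplify each double factorial. For the even terms, $(2j)!! = 2^j\, j!$, so the even sum is $\sum_{j=0}^{\infty} (x^2/2)^j / j! = \exp(x^2/2)$. For the odd terms, I would use $(2j+1)!! = (2j+1)!/(2^j j!)$, which rewrites the odd sum in a form recognizable as a series for the error function. The cleanest route is to recognize $\textrm{erf}(x/\sqrt 2) = \sqrt{2/\pi}\int_0^x e^{-t^2/2}\,dt$ and to verify the identity by differentiation.

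The differentiation approach is where I would focus the real work, since it sidesteps delicate series manipulations. Let $S(x)$ denote the left-hand side and $R(x)$ the right-hand side. Both satisfy $S(0) = R(0) = 1$, so it suffices to show $S'(x) = R'(x)$. Differentiating $R(x)$ term by term, the derivative of the $\textrm{erf}$ factor produces (via the fundamental theorem of calculus) a term proportional to $e^{-x^2/2}\cdot e^{x^2/2} = 1$, while differentiating the exponential factor reproduces $x\,R(x)$; collecting these gives $R'(x) = 1 + x\,R(x)$. On the series side, differentiating $S(x)$ and reindexing shows $S$ satisfies the same first-order linear relation $S'(x) = 1 + x\,S(x)$, using the defining recurrence $k!! = k\,(k-2)!!$. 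Since $S$ and $R$ solve the same linear ODE with the same initial condition, they coincide.

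The main obstacle is analytic rather than algebraic: I must confirm that the scalar series converges for all real $x$ (it does, since $1/k!!$ decays faster than $1/\lfloor k/2\rfloor!$, giving an entire function), which justifies term-by-term differentiation and, more importantly, legitimizes passing to the matrix argument. Because the series defines an entire function, the substitution $x \mapsto A$ is valid for every real eigenvalue $\lambda_i$ of $A$, and the spectral decomposition guarantees the matrix series converges in any submultiplicative norm. One should also confirm that the matrix error function $\textrm{erf}(A)$ of reference \cite{erf} is defined precisely as $U\,\textrm{erf}(\Lambda)\,U^T$, so that the two sides of (\ref{eq:matrix function}) agree entrywise after the eigenvalue substitution; this is the only place where the matrix-specific definition, rather than the scalar identity, enters.
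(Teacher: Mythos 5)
Your proposal is correct and follows the same route as the paper: both reduce the matrix identity to the scalar identity $\sum_{k\ge 0} x^k/k!! = \tfrac{1}{2}\bigl[\sqrt{2\pi}\,\mathrm{erf}(x/\sqrt{2})+2\bigr]e^{x^2/2}$ via the spectral decomposition $A=U\Lambda U^{T}$ and then substitute eigenvalue-by-eigenvalue. The only difference is that the paper simply asserts the scalar identity in one step, whereas you actually prove it (correctly) by showing both sides satisfy $y'=1+xy$ with $y(0)=1$, using $k!!=k\,(k-2)!!$ on the series side; this fills in the one step the paper leaves unjustified.
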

\begin{proof}
\textcolor{black}{Let us consider the spectral decomposition (\ref{eq:spectral decomposition}).
Then we can write}

\textcolor{black}{
\begin{eqnarray*}
\left(\sum_{k=0}^{\infty}\frac{A^{k}}{k!!}\right)_{pq} & = & \sum_{k=0}^{\infty}\sum_{j=1}^{n}\psi_{j,p}\psi_{j,q}\dfrac{\lambda_{j}^{k}}{k!!}\\
 & = & \sum_{j=1}^{n}\psi_{j,p}\psi_{j,q}\sum_{k=1}^{\infty}\dfrac{\lambda_{j}^{k}}{k!!}\\
 & = & \dfrac{1}{2}\sum_{j=1}^{n}\psi_{j,p}\psi_{j,q}\exp\left(\lambda_{j}^{2}/2\right)\left[\sqrt{2\pi}\textrm{erf}\left(\dfrac{\lambda_{j}}{\sqrt{2}}\right)+2\right],
\end{eqnarray*}
which can be written in the matrix form (\ref{eq:matrix function}),
proving the result.}
\end{proof}
\medskip{}

\textcolor{black}{From the computational point of view the main problem
for obtaining (\ref{eq:matrix function}) is provided by the calculation
of the matrix error function. In order to circumvent this difficulty
we make use here of the remarkable similarity between $\text{erf}(x)$
and $\tanh(x)$ (see Figure \ref{Figure3.2a}(a)). As can be seen
in Figure \ref{Figure3.2a}(a) there is a gap between the functions
in the interval $-2\leq x\leq2$. We can definitively improve the
similarity between the two function in the following way. The function
$\left[\text{erf}(x)-\tanh(kx)\right]$ is odd and so its integral
from $-\infty$ to $\infty$ is zero. Then, we will consider the integral}

\begin{equation}
\int_{0}^{\infty}\left[\text{erf}(x)-\tanh(kx)\right],\label{eq:integral diff}
\end{equation}
which we will make equal to zero as a way to minimize the difference
between the two functions. In other words, we will find the value
of $k$ for which (\ref{eq:integral diff}) is zero. Mathematically, 

\[
\lim_{a\rightarrow\infty}\int_{0}^{a}\left[\text{erf}(x)-\tanh(kx)\right]=0,
\]
which after integration becomes

\[
\lim_{a\rightarrow\infty}\left[a\text{erf}(a)-\frac{\text{ln(cosh}(ka))}{k}\right]=\frac{1}{\sqrt{\pi}}.
\]
Using the relation between the hyperbolic cosine and the exponential
we have

\[
\lim_{a\rightarrow\infty}\left[a\text{erf}(a)-\frac{\text{ln}(e^{ka}+e^{-ka})}{k}\right]+\frac{\text{ln}(2)}{k}=\frac{1}{\sqrt{\pi}}
\]

As $a$ grows to infinity, $e^{-ka}$ will vanish and $\text{erf}\left(a\right)=1$.
Then

\[
\lim_{a\rightarrow\infty}\left[a-\ln\left(e^{ka}\right)\right]+\frac{\text{ln}(2)}{k}=\frac{1}{\sqrt{\pi}},
\]
leading to

\[
\frac{\text{ln}(2)}{k}=\frac{1}{\sqrt{\pi}}.
\]
\textcolor{black}{Which gives us the result of $k=\sqrt{\pi}\text{ln}(2)$.
That is, $k=\sqrt{\pi}\ln(2)$ minimizes the gap between the two functions
as can be seen in Figure \ref{Figure3.2a}(b). Consequently, we define
the matrix function}

\textcolor{black}{
\begin{eqnarray}
D'\left(A\right) & = & \sum_{k=0}^{\infty}\frac{A^{k}}{k!!}=\frac{1}{2}\left[\sqrt{2\pi}\textrm{erf}\left(\frac{A}{\sqrt{2}}\right)+2I\right]\textrm{exp}\left(\frac{A^{2}}{2}\right),\\
 & \simeq & \frac{1}{2}\left[\sqrt{2\pi}\tanh\left(\frac{kA}{\sqrt{2}}\right)+2I\right]\textrm{exp}\left(\frac{A^{2}}{2}\right),
\end{eqnarray}
where}

\textcolor{black}{
\[
\tanh(kA)=\frac{e^{kA}-e^{-kA}}{e^{kA}+e^{-kA}}.
\]
}

\textcolor{black}{Hereafter, we define the function }

\textcolor{black}{
\begin{eqnarray}
D\left(A\right) & = & \frac{1}{2}\left[\sqrt{2\pi}\tanh\left(\frac{kA}{\sqrt{2}}\right)+2I\right]\textrm{exp}\left(\frac{A^{2}}{2}\right),
\end{eqnarray}
where we use $\tanh\left(kA\right)$ instead of $\text{erf}(A)$.
It represents }an approximate double-factorial or quasi-double-factorial
\textcolor{black}{function, but for the sake of simplicity we will
simply refer to it generalically as the double-factorial approach.
We then define the following indices that will be studied in this
work.}

\begin{figure}
\begin{centering}
\textcolor{black}{\includegraphics[width=0.5\textwidth]{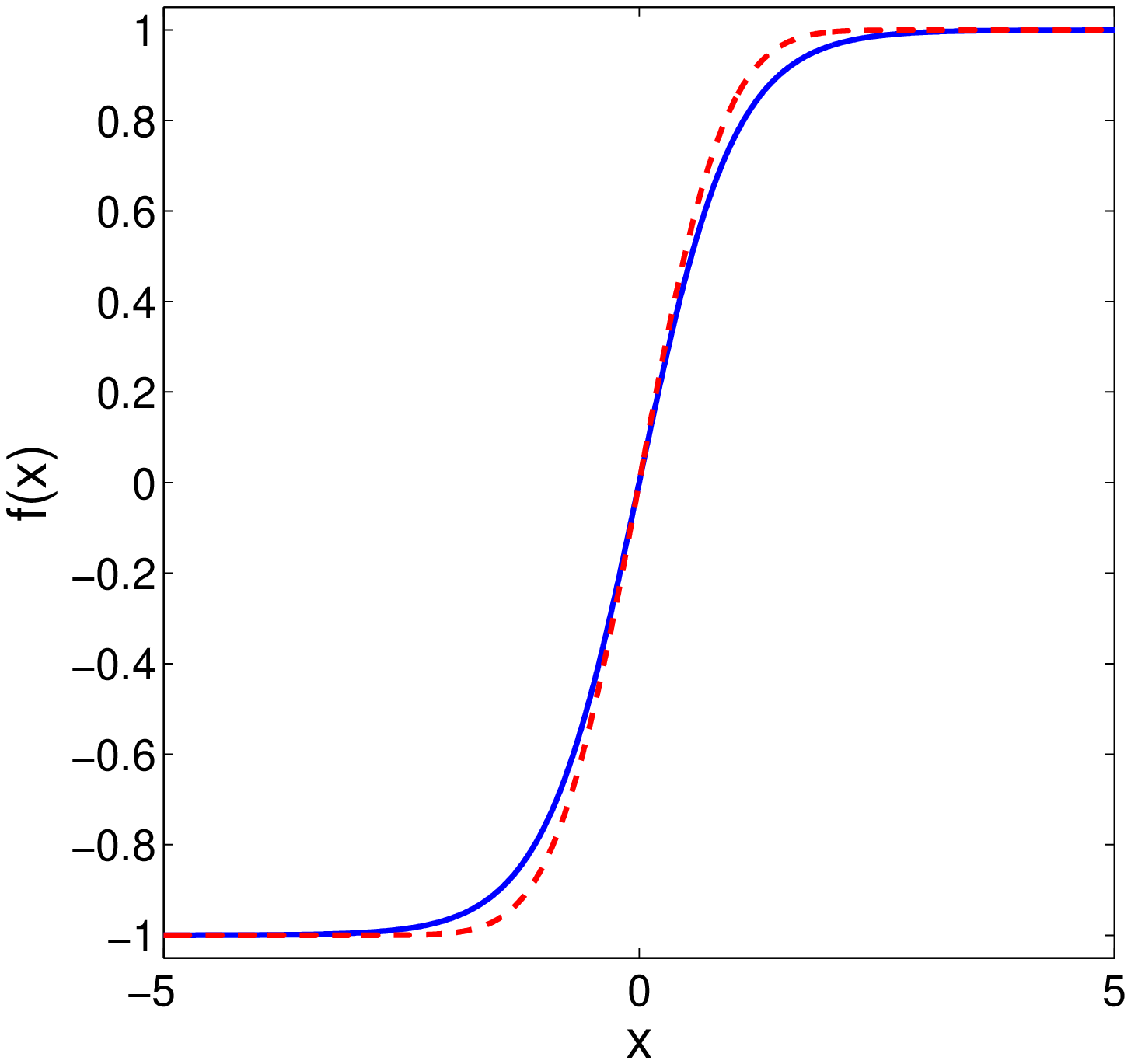}\includegraphics[width=0.5\textwidth]{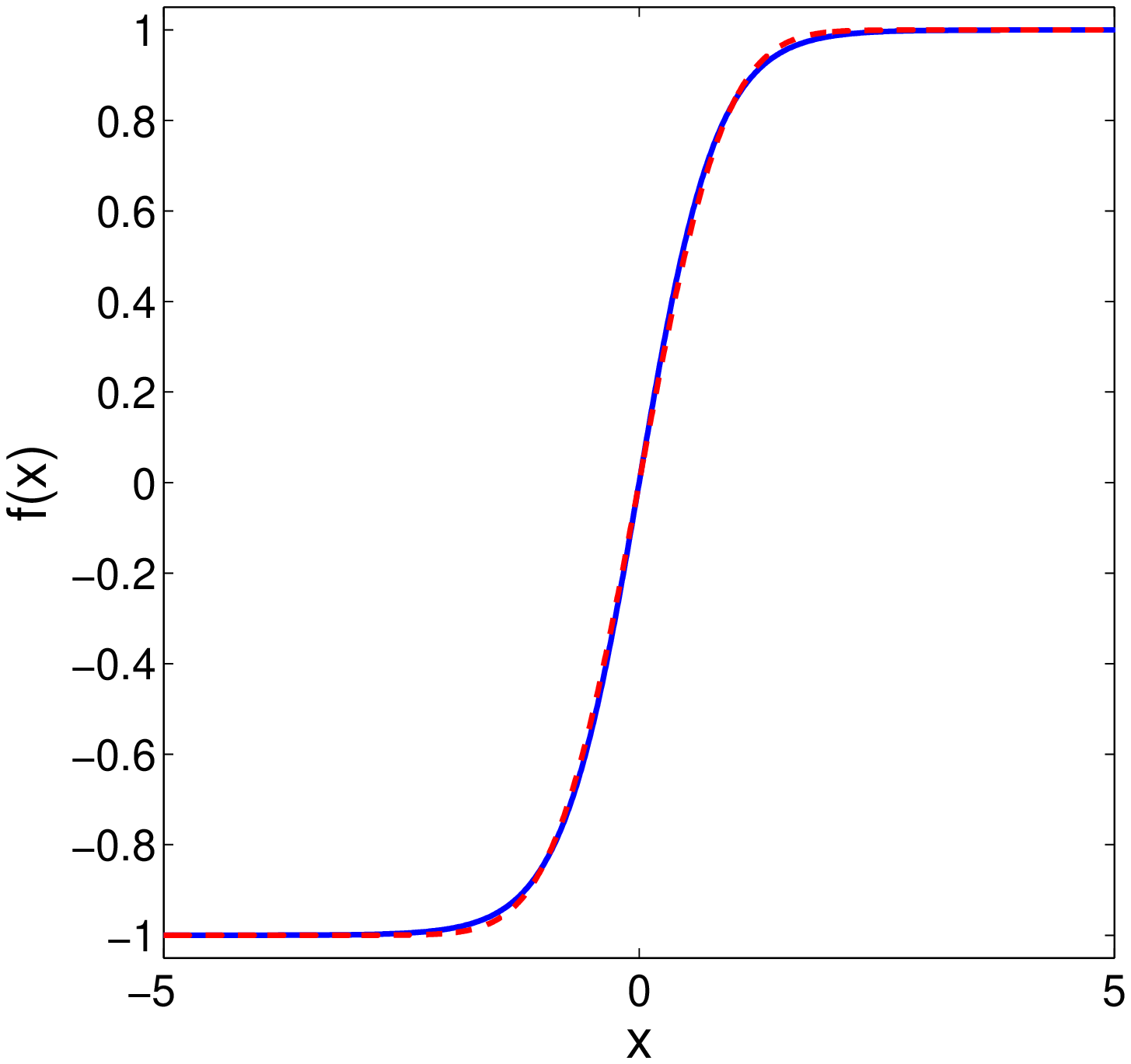}}
\par\end{centering}
\caption{\textcolor{black}{(a) Illustration of the similarities between $\text{erf}(x)$
(solid blue line) and $\tanh\left(x\right)$ (broken red line). (b)
Similar comparison between $\text{erf}(x)$ (solid blue line) and
$\tanh\left(kx\right)$ (broken red line) for $k=\sqrt{\pi}\log(2)$. }}

\label{Figure3.2a}
\end{figure}

\begin{defn}
\textcolor{black}{Let $p$ and $q$ be any two nodes of the graph
$G.$ The double-factorial communicability between these two nodes
is defined by $\Gamma_{pq}=\left(D\left(A\right)\right)_{pq}.$ Similarly,
the term $\Gamma_{pp}=\left(D\left(A\right)\right)_{pp}$ will be
called the double-factorial subgraph centrality of the node $p$ and
$\Gamma\left(G\right)=tr\left(D\left(A\right)\right),$ the double-factorial
Estrada index of $G.$ }

\textcolor{black}{The generalization of the new matrix function and
the indices derived from it lead naturally to considering the following
parameter $\beta\in\mathbb{R}$. That is, in general we can consider }
\end{defn}
\textcolor{black}{
\begin{eqnarray}
D\left(A,\beta\right) & = & \frac{1}{2}\left[\sqrt{2\pi}\tanh\left(\frac{k\beta A}{\sqrt{2}}\right)+2I\right]\textrm{exp}\left(\frac{\beta^{2}A^{2}}{2}\right),
\end{eqnarray}
and the corresponding indices $\Gamma_{pq}\left(\beta\right)=\left(D\left(A,\beta\right)\right)_{pq}$,
$\Gamma_{pp}\left(\beta\right)=\left(D\left(A,\beta\right)\right)_{pp}$,
and $\Gamma\left(G,\beta\right)=tr\left(D\left(A,\beta\right)\right).$
Hereafter every time that we write $\Gamma_{pq}$, $\Gamma_{pp}$,
and $\Gamma\left(G\right)$ it should be understood that $\beta\equiv1$. }

\section{\textcolor{black}{Properties of $\Gamma\left(G\right)$}}

\textcolor{black}{In this section we study some of the mathematical
properties of the indices derived from the new matrix function $\left(D\left(A\right)\right).$
In particular, we consider bounds for the double-factorial Estrada
index of graphs. In this section we consider that $\beta\equiv1$,
but the results are trivially extended for any $\beta.$}
\begin{prop}
\textcolor{black}{Let $G$ be a simple connected graph on $n$ nodes.
Then, the double-factorial Estrada index of $G$ is bounded as follows}

\textcolor{black}{
\begin{eqnarray}
\Gamma\left(G\right) & \leq & \frac{1}{2}\left(\sqrt{2\pi}\tanh\left(\frac{k(n-1)}{\sqrt{2}}\right)+2\right)\exp\left(\frac{(n-1)^{2}}{2}\right)\\
 &  & +\frac{(n-1)}{2}\left(\sqrt{2\pi}\tanh\left(\frac{-k}{\sqrt{2}}\right)+2\right)\exp\left(\frac{1}{2}\right),\nonumber 
\end{eqnarray}
with equality if and only if the graph is complete.}
\end{prop}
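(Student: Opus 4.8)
The plan is to diagonalize and reduce the whole statement to a one–variable estimate. Writing the spectral decomposition $A = U\Lambda U^{T}$ exactly as in the proof of the Lemma, the trace of $D(A)$ becomes $\Gamma(G) = \sum_{j=1}^{n} g(\lambda_{j})$, where $g(x) = \tfrac{1}{2}\bigl(\sqrt{2\pi}\tanh(kx/\sqrt{2}) + 2\bigr)\exp(x^{2}/2)$. The first move is to recognize the right-hand side of the claimed inequality as $g(n-1) + (n-1)g(-1)$, which is precisely $\Gamma(K_{n})$, since the complete graph has eigenvalue $n-1$ once and $-1$ with multiplicity $n-1$. Thus the proposition asserts that the complete graph maximizes $\Gamma$ over all simple connected graphs on $n$ nodes, and the whole argument amounts to comparing $\sum_{j} g(\lambda_{j})$ against the complete-graph spectrum.

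Next I would isolate the Perron contribution, splitting $\Gamma(G) = g(\lambda_{1}) + \sum_{j=2}^{n} g(\lambda_{j})$. For the leading term I would invoke the standard spectral bound $\lambda_{1} \le n-1$ for a simple graph on $n$ vertices, with equality if and only if $G$ is complete. Since on $[0,\infty)$ both factors of $g$ are positive and increasing, $g$ is monotone increasing there, and hence $g(\lambda_{1}) \le g(n-1)$, which accounts for the first summand of the bound. This step is routine, and the equality case is inherited from the case $\lambda_{1} = n-1 \iff G = K_{n}$ of the Perron estimate.

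The remaining, and genuinely harder, step is to show $\sum_{j=2}^{n} g(\lambda_{j}) \le (n-1)g(-1)$, i.e.\ that $-1$ is the worst placement of the non-principal eigenvalues. The difficulty is that $g$ is neither monotone nor everywhere nonnegative: it is large and positive for positive $x$, dips below zero for moderately negative $x$, and tends to $-\infty$ as $x \to -\infty$ (because $2-\sqrt{2\pi}<0$). Consequently a naive term-by-term estimate $g(\lambda_{j}) \le g(-1)$ is \emph{false} in general, and one must exploit the coupling between the eigenvalues: the constraints $\operatorname{tr}A = \sum_{j}\lambda_{j} = 0$ and $\operatorname{tr}A^{2} = \sum_{j}\lambda_{j}^{2} = 2m$, together with $\lambda_{j} \ge -\lambda_{1}$, force the non-principal eigenvalues to cluster near $-1$ exactly when $\lambda_{1}$ approaches its maximum $n-1$. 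I expect this coupling to be the main obstacle, and I would resolve it by a constrained-optimization or majorization argument over the admissible eigenvalue vectors rather than term by term.

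A cleaner alternative, which I would use to corroborate the result and to pin down the equality case, is monotonicity under edge addition for the exact (error-function) index. Since $D'(A) = \sum_{k\ge 0} A^{k}/k!!$ has strictly positive coefficients and $\operatorname{tr}(A^{k})$ counts closed walks of length $k$, the quantity $\operatorname{tr}(D'(A)) = \sum_{k\ge 0}\operatorname{tr}(A^{k})/k!!$ cannot decrease when an edge is inserted, and strictly increases through the $\operatorname{tr}(A^{2})$ term; hence it is uniquely maximized by $K_{n}$, yielding exactly the stated value and the equality condition. Transferring this to the $\tanh$-approximation $D$ is immediate for the equality case, since the right-hand side is by construction $\Gamma(K_{n}) = \operatorname{tr}(D(K_{n}))$.
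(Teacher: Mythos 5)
Your closing paragraph \emph{is} the paper's proof, so the result is correctly established there; the authors argue exactly as you do in that ``cleaner alternative'': deleting an edge $l$ removes the closed walks through $l$, so $\mu_k(G-l)\le\mu_k(G)$ for every $k$, hence $\sum_k \mathrm{tr}(A^k)/k!!$ is weakly increasing under edge insertion (strictly, via the $\mathrm{tr}(A^2)=2m$ term), is maximized uniquely by $K_n$, and the right-hand side is then just $\mathrm{tr}\bigl(D(K_n)\bigr)$ computed from the spectrum $\{n-1,\,-1^{(n-1)}\}$. Your primary spectral route, by contrast, stalls exactly where you say it does, and the gap is real: for the star $K_{1,n-1}$ the non-principal eigenvalues contribute roughly $(n-2)g(0)=n-2$, which exceeds $(n-1)g(-1)\approx 0.2(n-1)$ for large $n$, so the remainder bound $\sum_{j\ge2}g(\lambda_j)\le(n-1)g(-1)$ genuinely requires coupling with the deficit in the leading term and cannot be closed eigenvalue by eigenvalue; the paper does not attempt this route at all. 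One caveat that applies equally to your transfer step and to the paper's own write-up: the monotonicity argument lives on the exact series $D'(A)=\sum_k A^k/k!!$, while the proposition's $\Gamma$ is defined through the $\tanh$ surrogate $D(A)$, and passing the inequality from $\mathrm{tr}(D')$ to $\mathrm{tr}(D)$ is not literally immediate (the approximation could in principle perturb a near-equality); the paper glosses over this in the same way, so it is a shared looseness rather than a defect of your argument relative to theirs.
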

\begin{proof}
\textcolor{black}{Let $l$ be an edge of $G$ and assume that $G$
is not trivial, i.e., it contains at least one edge. Let $G-l$ be
the graph resulting from removing the edge $l$ from $G$. Let $\mu_{k}\left(G\right)$
be the number of closed walks of length $k$ in $G$. Then, $\mu_{k}\left(G-l\right)=\mu_{k}\left(G\right)-\mu_{k}\left(G:l\right)$,
where $\mu_{k}\left(G:l\right)$ is the number of closed walks of
length $k$ in $G$ which contain the edge $l$. Consequently,}

\textcolor{black}{
\[
\sum_{p=1}^{n}\left(\sum_{k=0}^{\infty}\frac{\mu_{k}\left(G-l\right)}{k!!}\right)_{pp}\leq\sum_{p=1}^{n}\left(\sum_{k=0}^{\infty}\frac{\mu_{k}\left(G\right)}{k!!}\right)_{pp},
\]
which means that $\Gamma\left(G\right)\leq\Gamma\left(K_{n}\right)$
with equality if the graph is the complete graph won $n$ vertices.
We now obtain the formula for $\Gamma\left(K_{n}\right).$ The spectrum
of $K_{n}$ is $\lambda_{1}=n-1$ with multiplicity one and $\lambda_{j\geq2}=-1$
with multiplicity $n-1$ from which the result immediately appears.}
\end{proof}
\begin{cor}
\textcolor{black}{\label{cor: Spanning tree}Let $G$ be a graph and
let $T$ be a spanning tree of $G$. Then}

\textcolor{black}{
\begin{equation}
\Gamma\left(G\right)\geq\Gamma\left(T\right).
\end{equation}
}
\end{cor}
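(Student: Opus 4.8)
The plan is to derive this directly from the edge-deletion monotonicity that already underlies the proof of the Proposition, and then iterate it. The central observation is that deleting an edge can only destroy closed walks, never create them: for any edge $l$ of $G$ we have $\mu_k(G-l) = \mu_k(G) - \mu_k(G:l)$ with $\mu_k(G:l) \geq 0$, so $\mu_k(G-l) \leq \mu_k(G)$ for every length $k$. Since each penalization coefficient $1/k!!$ is nonnegative and the series $\sum_k \mu_k/k!!$ converges (by the Lemma it equals the trace of a closed-form function of $A$, hence is finite for every adjacency matrix), summing term by term yields the single-edge inequality $\Gamma(G-l) \leq \Gamma(G)$.

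Next I would realize the spanning tree $T$ as the end product of deleting the non-tree edges of $G$ one at a time. Writing $m$ for the number of edges of $G$ and recalling that a spanning tree on $n$ vertices has exactly $n-1$ edges, there are $m-(n-1)$ such edges; I would label them $l_1,\ldots,l_{m-(n-1)}$ in any order and set $G_0 = G$ and $G_{i} = G_{i-1} - l_i$. Each $G_i$ is again a simple graph on the same vertex set $V$, so the single-edge step applies at every stage, giving the chain $\Gamma(G) = \Gamma(G_0) \geq \Gamma(G_1) \geq \cdots \geq \Gamma(G_{m-(n-1)}) = \Gamma(T)$, which is the claimed inequality.

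The only point requiring a little care is that the intermediate graphs remain admissible, i.e.\ simple graphs on the vertex set $V$, so that the closed-walk interpretation $\mu_k = (A^k)_{pp}$ and the series representation continue to hold throughout; this is immediate, since edge deletion preserves both simplicity and the vertex set. I do not expect a genuine obstacle here: once the inequality $\Gamma(G-l) \leq \Gamma(G)$ is in hand from the Proposition's argument, the corollary is just its iteration down to a spanning subgraph. One could equivalently phrase the entire argument at the level of the exact double-factorial series $\sum_k A^k/k!!$ of the Lemma, which is the form in which the closed-walk counts appear most transparently and in which the nonnegativity of the coefficients does the work.
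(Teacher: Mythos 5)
Your proposal is correct and is essentially the paper's own argument: the Proposition is proved precisely by the single-edge-deletion inequality $\mu_k(G-l)\leq\mu_k(G)$ summed against the nonnegative coefficients $1/k!!$, and the Corollary is obtained by iterating that step over the non-tree edges until only the spanning tree remains. Your additional remark that the argument is cleanest at the level of the exact series $\sum_k A^k/k!!$ (rather than the $\tanh$-approximated $D(A)$) is a fair observation, and it matches how the paper itself handles the Proposition.
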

\textcolor{black}{In the next part of this section we will find a
lower bound for the double-factorial Estrada index of graphs. First,
we find an expression for this index for the path graph $P_{n}$,
which will be needed for proving the lower bound.}
\begin{lem}
\textcolor{black}{Let $P_{n}$ be a path with $n$ nodes. Then, when
$n\rightarrow\infty$
\begin{equation}
\Gamma\left(P_{n}\right)=eI_{0}(1)\left(n+\frac{1}{2}\right)-\frac{e^{2}}{2}+o\left(n\right),\label{eq:exact formula path}
\end{equation}
}where $I_{\gamma}\left(z\right)$ are modified Bessel functions of
the first kind \cite{Bessel}.
\end{lem}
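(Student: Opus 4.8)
The plan is to evaluate $\Gamma(P_n)=\mathrm{tr}\,D(A)$ directly from the spectrum of the path, which is known in closed form: the eigenvalues of the adjacency matrix of $P_n$ are $\lambda_j=2\cos\!\big(\tfrac{j\pi}{n+1}\big)$, $j=1,\dots,n$. Writing the trace as a sum over eigenvalues and recalling the definition of $D(A)$, I would split $D(\lambda)$ into its even part $\exp(\lambda^2/2)$ and its odd part $\tfrac{1}{2}\sqrt{2\pi}\tanh\!\big(\tfrac{k\lambda}{\sqrt2}\big)\exp(\lambda^2/2)$. Since $P_n$ is bipartite its spectrum is symmetric about $0$, so the odd part cancels in the pairs $\lambda_j\leftrightarrow-\lambda_j$ (and a possible zero eigenvalue contributes nothing). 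Hence, exactly, $\Gamma(P_n)=\sum_{j=1}^{n}\exp(\lambda_j^2/2)$.

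Next I would simplify the summand using $\lambda_j^2/2=2\cos^2\!\big(\tfrac{j\pi}{n+1}\big)=1+\cos\!\big(\tfrac{2j\pi}{n+1}\big)$, so that
\[
\Gamma(P_n)=e\sum_{j=1}^{n}\exp\!\Big(\cos\tfrac{2j\pi}{n+1}\Big).
\]
The whole problem is now the asymptotics of this trigonometric sum. The appearance of $I_0(1)$ is explained by the integral representation $I_0(1)=\tfrac{1}{2\pi}\int_0^{2\pi}e^{\cos\phi}\,d\phi$: the average of $e^{\cos\phi}$ over a full period equals $I_0(1)$, so the sum should behave like $e\,I_0(1)\,n$ to leading order, which already fixes the coefficient of $n$ in \eqref{eq:exact formula path}.

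To pin down the lower-order terms I would use the Fourier–Bessel expansion $e^{\cos\phi}=I_0(1)+2\sum_{m\ge1}I_m(1)\cos(m\phi)$ together with the exactly summable character sums $\sum_{j=1}^{N-1}\cos\!\big(\tfrac{2\pi mj}{N}\big)=N\,\mathbf{1}[N\mid m]-1$, with $N=n+1$. Substituting term by term collapses the double sum into a main term proportional to $n$, a constant coming from the indices $m$ not divisible by $N$ (which resum to $e$ via $\sum_{m\in\mathbb{Z}}I_m(1)=e$), and an aliasing remainder $N\sum_{l\ge1}I_{lN}(1)$ that is $o(1)$ because $I_{lN}(1)$ decays super-exponentially in $N$. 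Equivalently, one can reach the same expansion by applying the Euler–Maclaurin formula to the smooth integrand on $[0,\pi]$, with the endpoint values at $\theta=0,\pi$ supplying the boundary corrections.

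The main obstacle is the careful bookkeeping of these sub-leading contributions: one must combine the $n$-linear Bessel term with the boundary/endpoint terms and the resummed constant to read off the lower-order coefficients displayed in \eqref{eq:exact formula path}, while rigorously bounding the aliasing tail $N\sum_{l\ge1}I_{lN}(1)=o(1)$ and justifying the exact cancellation of the $\tanh$ contribution. Everything else — the spectral formula for $P_n$, the even/odd split, and the Bessel integral identity — is routine.
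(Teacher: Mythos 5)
Your route is sound and genuinely different from the paper's. The paper computes each diagonal entry $\Gamma_{pp}(P_n)$ from the explicit eigenvectors $\psi_{j,p}=\sqrt{2/(n+1)}\,\sin(jp\pi/(n+1))$, passes to the pointwise limit $\Gamma_{pp}\to e(I_0(1)-I_p(1))$ via a Riemann-sum/Bessel-integral argument for each fixed $p$, and only then sums over $p$ using $\sum_{p\ge1}I_p(1)=\tfrac12(e-I_0(1))$. You instead sum over $p$ first: the trace needs only the eigenvalues, bipartite symmetry kills the $\tanh$ part exactly, and everything reduces to the single trigonometric sum $e\sum_{j}\exp(\cos\tfrac{2j\pi}{n+1})$, which your character-sum/Fourier--Bessel computation evaluates exactly up to a super-exponentially small aliasing tail. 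This is cleaner, and it sidesteps the delicate step in the paper of summing $n$ pointwise limits each carrying an $O(1/n)$ error.

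One thing to be aware of: carried to completion, your computation gives $\sum_{j=1}^{n}e^{\cos(2\pi j/(n+1))}=(n+1)I_0(1)-e+o(1)$, hence $\Gamma(P_n)=eI_0(1)(n+1)-e^2+o(1)$, whereas the displayed formula reads $eI_0(1)(n+\tfrac12)-\tfrac{e^2}{2}$. The two differ by the constant $\tfrac{e}{2}\bigl(e-I_0(1)\bigr)\approx 1.97$; a numerical check at $n=10$ gives an exact trace of about $30.47$ against about $32.44$ from the displayed formula, confirming your constant. Since the lemma only claims an $o(n)$ error, both expressions satisfy the statement as written (constants are $o(n)$), so your argument does prove the lemma; but do not expect your constant term to reproduce the printed one, and note that the discrepancy originates precisely in the limit-interchange that your method avoids.
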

\begin{proof}
\textcolor{black}{Let $n$ be number of nodes in $P_{n}$ 
\begin{eqnarray}
\Gamma\left(P_{n}\right) & = & \stackrel[p=1]{n}{\sum}\Gamma_{pp}\left(P_{n}\right).
\end{eqnarray}
}

\textcolor{black}{By substituting the eigenvalues and eigenvectors
of the path graph into the expression for $\Gamma_{pp}$ we obtain
\begin{eqnarray}
{\color{black}\Gamma_{pp}\left(P_{n}\right)} & {\color{black}=} & {\color{black}\frac{2}{n+1}\stackrel[j=1]{n}{\sum}\sin^{2}\left(\frac{j\pi p}{n+1}\right)\exp\left(2\cos^{2}\left(\frac{j\pi}{n+1}\right)\right)}\\
{\color{black}{\color{black}}} & {\color{black}{\color{black}=}} & {\color{black}{\color{black}{\color{black}{\color{black}{\color{black}{\color{black}{\color{black}{\color{red}{\color{black}\frac{{\color{black}1}}{n+1}}{\color{black}\stackrel[j=1]{n}{\sum}}{\color{black}{\color{black}\left[1-\cos\left(\frac{2j\pi p}{n+1}\right)\right]\exp}\left({\color{black}1+\cos\left(\frac{2j\pi}{n+1}\right)}\right)}}}}}}}}}\\
{\color{black}{\color{black}}} & {\color{black}{\color{black}=}} & {\color{black}{\color{black}{\color{black}{\color{black}{\color{black}{\color{red}{\color{black}\frac{e}{n+1}}{\color{black}\stackrel[j=1]{n}{\sum}\left[1-\cos\left(\frac{2j\pi p}{n+1}\right)\right]}{\color{black}\exp\left(\cos\left(\frac{2j\pi}{n+1}\right)\right).}}}}}}}\label{eqb}
\end{eqnarray}
 Now, when $n\rightarrow\infty$ the summation in (\ref{eqb}) can
be evaluated by making use of the following integral 
\begin{equation}
\Gamma_{pp}\left(P_{n}\right)=\frac{e}{\pi}\int_{0}^{\pi}\exp(\cos\theta)d\theta-\frac{e}{\pi}\int_{0}^{\pi}\cos\left(p\theta\right)\exp(\cos\theta)d\theta+o\left(n\right),
\end{equation}
 where $\theta=\frac{2j\pi}{n+1}$. Thus, when $n\rightarrow\infty$
we have 
\begin{equation}
\Gamma_{pp}\left(P_{n}\right)=e\left(I_{0}(1)-I_{p}(1)\right)+o\left(n\right).
\end{equation}
}

\textcolor{black}{We then have}

\textcolor{black}{
\begin{equation}
\Gamma\left(P_{n}\right)=eI_{0}(1)n-e\stackrel[p=1]{n}{\sum}I_{p}(1)+o\left(n\right).\label{eq:path graph}
\end{equation}
}

\textcolor{black}{Replacing the sum $\stackrel[p=1]{\infty}{\sum}I_{p}(x)=\frac{1}{2}\left(e^{x}-I_{0}\left(x\right)\right)$
we finally obtain the result when $n\rightarrow\infty$. }
\end{proof}
\textcolor{black}{Now, we can find the lower bound for the double-factorial
Estrada index of graphs.}
\begin{lem}
\textcolor{black}{Let $G$ be a graph with $n$ nodes. Then,}

\textcolor{black}{
\begin{equation}
\Gamma\left(G\right)\geq eI_{0}(1)\left(n+\frac{1}{2}\right)-\frac{e^{2}}{2},
\end{equation}
with equality if and only if $G$ is the path graph $P_{n}$, }where
$I_{\gamma}\left(z\right)$ are modified Bessel functions of the first
kind \cite{Bessel}.
\end{lem}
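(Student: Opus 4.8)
The plan is to bound $\Gamma(G)$ from below by $\Gamma(P_n)$ in two stages --- first passing from $G$ to a spanning tree, then from an arbitrary tree to the path --- and finally to substitute the explicit value of $\Gamma(P_n)$ furnished by the previous lemma. If $T$ is any spanning tree of the connected graph $G$, then Corollary~\ref{cor: Spanning tree} gives $\Gamma(G)\ge\Gamma(T)$, and I would note this is strict unless $G=T$: deleting an edge strictly decreases $\mu_2=\operatorname{tr}(A^2)=2m$, and since every coefficient $1/k!!$ in $\sum_{k\ge0}\mu_k(G)/k!!$ is positive, each edge removal strictly lowers the index. Hence it suffices to minimize $\Gamma$ over all trees on $n$ vertices and to show the path is the unique minimizer.

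The passage to trees simplifies the index considerably, and I would exploit this first. A tree is bipartite, so its spectrum is symmetric about the origin; pairing each eigenvalue $\lambda$ with $-\lambda$, the odd factor $\tanh(k\lambda/\sqrt2)\exp(\lambda^2/2)$ cancels in the trace because $\tanh$ is odd while $\exp(\lambda^2/2)$ is even. Therefore, for every tree $T$,
\[
\Gamma(T)=\operatorname{tr}\!\left(\exp\!\left(\tfrac{A^2}{2}\right)\right)=\sum_{k\ge0}\frac{\mu_{2k}(T)}{2^{k}k!},
\]
where $\mu_{2k}(T)=\operatorname{tr}(A^{2k})$ counts the closed walks of length $2k$ and the odd moments vanish. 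This expression is identical whether one uses the exact $D'(A)$ or its $\tanh$ approximation $D(A)$, so no approximation error contaminates the bound.

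The crux --- and the step I expect to be the main obstacle --- is to prove that among all trees on $n$ vertices the path $P_n$ minimizes each spectral moment $\mu_{2k}$, with simultaneous equality only for $P_n$. Since every coefficient $1/(2^{k}k!)$ is positive, this termwise domination immediately yields $\Gamma(T)\ge\Gamma(P_n)$. I would establish the moment inequality by a grafting (branch-straightening) argument: if $T$ is not a path it has a vertex of degree at least three, and one shows that a transformation which detaches a branch and reattaches it so as to lengthen a pendant path never increases $\mu_{2k}$ for any $k$; iterating transports any tree to $P_n$. A convenient handle for the monotonicity of a single transformation is the identity $\mu_{2k}=\sum_{i,j}\bigl((A^{k})_{ij}\bigr)^2=\|A^{k}\|_F^2$, which writes each moment as a sum of squares of walk counts and makes the effect of relocating a branch more transparent; alternatively one may invoke the known quasi-order of trees by their closed-walk sequences, for which $P_n$ is the least and the star $S_n$ the greatest element.

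Finally, combining the two inequalities gives $\Gamma(G)\ge\Gamma(T)\ge\Gamma(P_n)$, and the formula from the preceding lemma, $\Gamma(P_n)=eI_0(1)\!\left(n+\tfrac12\right)-\tfrac{e^2}{2}$ (in the $n\to\infty$ regime), is exactly the asserted lower bound. For the equality discussion, the first inequality is strict unless $G$ is itself a tree, and the second is strict unless that tree is $P_n$; hence equality throughout forces $G\cong P_n$, as claimed.
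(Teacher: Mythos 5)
Your proposal is correct in outline and follows the paper's skeleton for the outer steps (reduce to a spanning tree via Corollary~\ref{cor: Spanning tree}, use bipartiteness of trees to collapse $\Gamma(T)$ to $\sum_j\exp(\lambda_j^2/2)$, then plug in the asymptotic formula for $\Gamma(P_n)$), but it diverges at the crucial middle step. The paper does \emph{not} attempt to show that $P_n$ minimizes $\Gamma$ over trees by comparing walk counts; instead it bounds $\Gamma(T)$ below by a single explicit polynomial $F(n)=n+m+\frac{m^2}{2}+\frac{m^3}{6}+\frac{m^4}{24}$ with $m=n-1$ (valid uniformly over all trees since every tree has $n-1$ edges) and then checks $F(n)\geq eI_0(1)\left(n+\frac{1}{2}\right)-\frac{e^2}{2}$ for $n\geq 3$. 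You instead invoke the termwise moment domination $\mu_{2k}(T)\geq\mu_{2k}(P_n)$ for all $k$, i.e.\ the known fact that the path is the minimal element in the quasi-order of trees by closed-walk counts. Your route buys a genuinely stronger conclusion --- it actually establishes that $P_n$ is the unique minimizer among trees, which the paper's argument does not (a uniform lower bound $F(n)$ that happens to exceed $\Gamma(P_n)$ says nothing about uniqueness of the extremal tree, so the paper's ``equality iff $P_n$'' claim is better supported by your argument than by its own). The cost is that the moment-minimality of the path is a nontrivial extremal result: your grafting sketch is not a proof, and you should either cite the quasi-order literature explicitly or carry out the branch-relocation computation. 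Conversely, the paper's route is self-contained in principle but its key intermediate inequality, effectively $\sum_j(\lambda_j^2/2)^k/k!\geq m^k/k!$ for $k\geq 2$, is not justified as written, so your approach is arguably the sounder of the two. One caveat you share with the paper: the formula for $\Gamma(P_n)$ from the preceding lemma carries an $o(n)$ error term, so neither argument literally delivers the stated bound as an exact inequality for finite $n$; you are at least candid about working ``in the $n\to\infty$ regime.''
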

\begin{proof}
\textcolor{black}{Let $T$ be a tree with n nodes}

\textcolor{black}{
\begin{eqnarray}
\Gamma\left(T\right) & = & \sum_{j=1}^{n}\exp\left(\lambda_{j}^{2}/2\right)\\
 & = & n+\sum_{j}\lambda_{j}^{2}/2+\sum_{j}\frac{\left(\lambda_{j}^{2}/2\right)^{2}}{2!}+\sum_{j}\frac{\left(\lambda_{j}^{2}/2\right)^{3}}{3!}+\cdots\\
 & \geq & n+m+\dfrac{m^{2}}{2}+\frac{m^{3}}{6}+\frac{m^{4}}{24}=n+\sum_{k=1}^{4}\frac{(n-1)^{k}}{k}=F(n)
\end{eqnarray}
}where $m=n-1$ is the number of edges in the path graph.\textcolor{black}{{}
It is easy to show that for $n\geq3$}

\textcolor{black}{
\begin{equation}
\Gamma\left(T\right)\geq F(n)\geq eI_{0}(1)\left(n+\frac{1}{2}\right)-\frac{e}{2}^{2}=\Gamma\left(P_{n}\right).
\end{equation}
}

\textcolor{black}{Using Corollary \ref{cor: Spanning tree} we easily
see that $\Gamma\left(G\right)\geq\Gamma\left(T\right)\geq\Gamma\left(P_{n}\right)$,
which proves the result.}
\end{proof}
\textcolor{black}{In closing, the double-factorial Estrada index of
graphs is bounded $\Gamma\left(P_{n}\right)\leq\Gamma\left(G\right)\leq\Gamma\left(K_{n}\right)$,
which is similar to $EE\left(G\right)$. In the next section we will
see that the two indices display significant differences when used
to analyze graphs containing significantly large chordless cycles
or holes.}

\section{\textcolor{black}{Graphs with holes}}

\textcolor{black}{We now consider a graph $G$ and and two nodes $p$
and $q$ in $G$. Suppose that all the number of subgraphs of sizes
smaller than a certain value $k_{0}$ to which the node $p$ belongs
to is larger than that for the node $q$. Also consider that the node
$q$ is involved in a larger number of subgraphs of size larger than
$k_{0}$ than the node $p$. This situation can be found in any graph
containing holes. A hole is a chordless cycle, that is a closed sequence
of nodes in $G$ such that each two adjacent nodes in the sequence
are connected by an edge and each two non-adjacent nodes in the sequence
are not connected by any edge in $G$. Then, the situation previously
described can appear when one of the nodes is in a chordless cycle
and the other not. An example of this situation is represented in
Figure \ref{Chordless cycle}. Here node $A$ takes place, for instance
in 3 triangles, while node $B$ takes place in 6. However, the number
of walks of length larger than 17 is bigger for node $A$, which indeed
is part of a chordless cycle of length 18, than for the node $B$. }

\begin{figure}
\begin{centering}
\textcolor{black}{\includegraphics[width=1\textwidth]{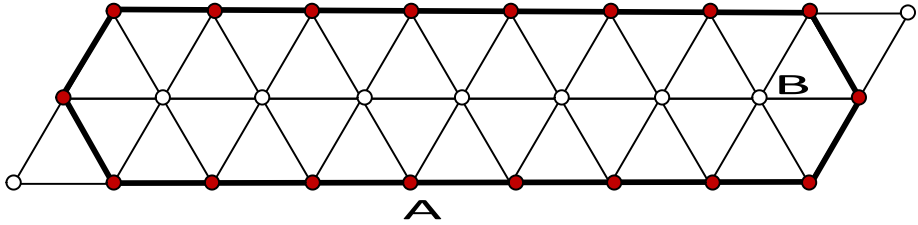}}
\par\end{centering}
\caption{Illustration of a triangular lattice with 27 nodes. The nodes marked
in red belong to a chordless cycle of length 18, which is\textcolor{black}{{}
highlighted with bolded edges. }The nodes $A$ and $B$ are discussed
in the main text.}

\label{Chordless cycle}
\end{figure}

\textcolor{black}{Now, let us express mathematically the situation
that we have described in the precedent paragraph. That is, $\left(A^{k}\right)_{pp}>\left(A^{k}\right)_{qq}$
for all $k<k_{0}$, and $\left(A^{k}\right)_{pp}<\left(A^{k}\right)_{qq}$
for all $k>k_{0}$, where $k_{0}\gg1$. Let us now consider the difference
between the double factorial subgraph centrality and the single-factorial
version of it for the node $p$,}

\textcolor{black}{
\begin{eqnarray}
\varDelta_{p} & = & \Gamma_{pp}\left(G\right)-EE_{pp}\left(G\right)\\
 & = & \dfrac{1}{6}\left(A^{3}\right)_{pp}+\dfrac{1}{12}\left(A^{4}\right)_{pp}+\dfrac{7}{120}\left(A^{5}\right)_{pp}+\dfrac{7}{360}\left(A^{6}\right)_{pp}+\cdots\\
 & = & \sum_{k=3}^{\infty}\dfrac{\left(k-1\right)!!-1}{k!!\left(k-1\right)!!}\left(A^{k}\right)_{pp}.
\end{eqnarray}
}

\textcolor{black}{Then, in the situation previously described it is
plausible that the functions $\varDelta_{p}$ and $\varDelta_{q}$
follow similar trends to the spectral moments of the adjacency matrix.
That is, $\varDelta_{p}>\varDelta_{q}$ for all $k<k_{0}$, and $\varDelta_{p}<\varDelta_{q}$
for all $k>k_{0}$.}

\textcolor{black}{For the example illustrated in Figure \ref{Chordless cycle}
we give in Figure \ref{Moments} the plots of $\left(A^{k}\right)_{pp}/k!!$
and $\left(A^{k}\right)_{pp}/k!$ for the nodes labelled as $A$ and
$B$ in Figure \ref{Chordless cycle}, as well as the plot of $\varDelta_{A}$
and $\varDelta_{B}.$ As can be seen the node $A$, which is in the
chordless cycle, has smaller contribution from small subgraphs than
node $B$, which is outside the hole. However, node $A$ takes place
in longer subgraphs, such as its own chordless cycle, than node $B$
and consequently $\varDelta_{A}>\varDelta_{B}$ for $k>18$. }

\begin{figure}

\begin{centering}
\textcolor{black}{\includegraphics[width=0.33\textwidth]{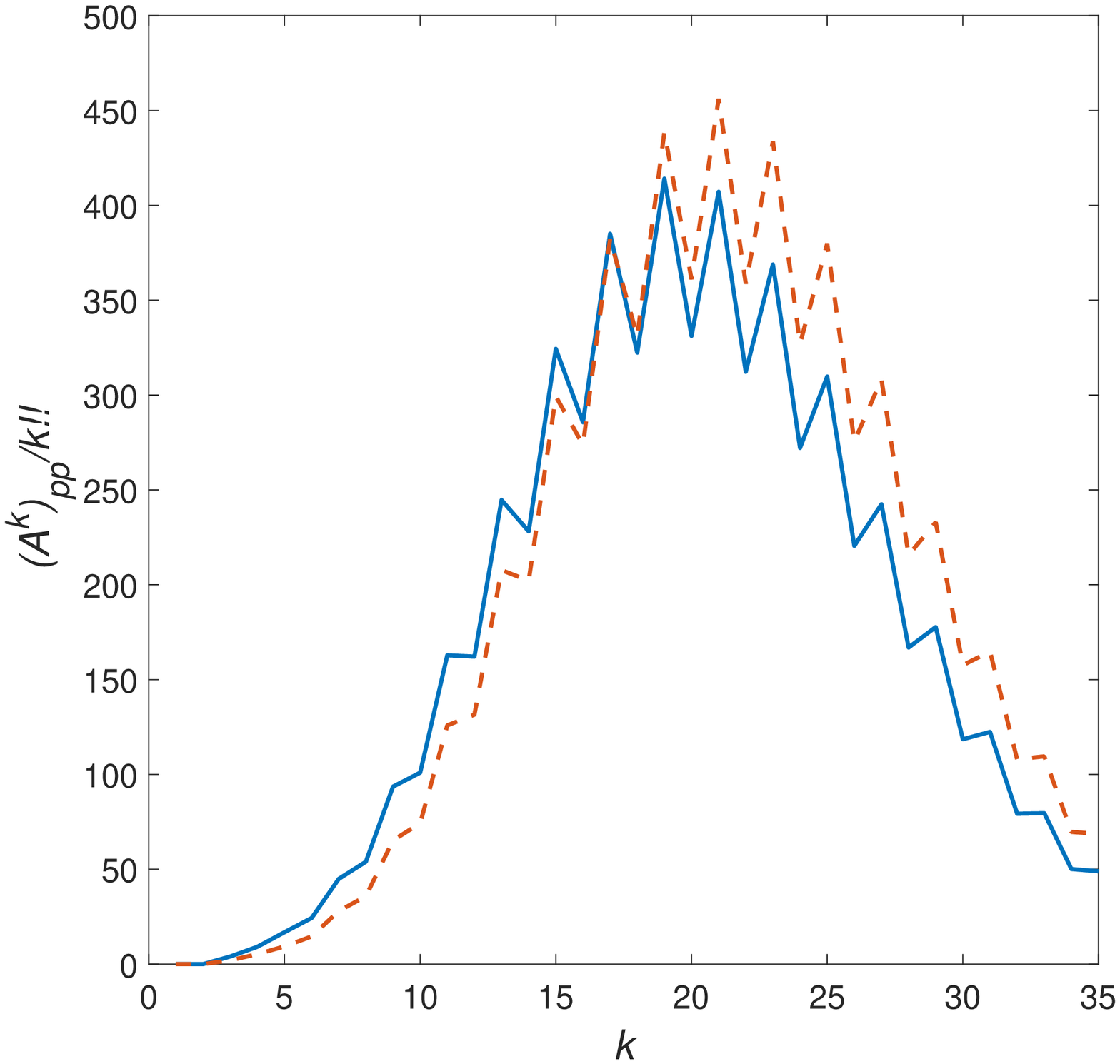}\includegraphics[width=0.33\textwidth]{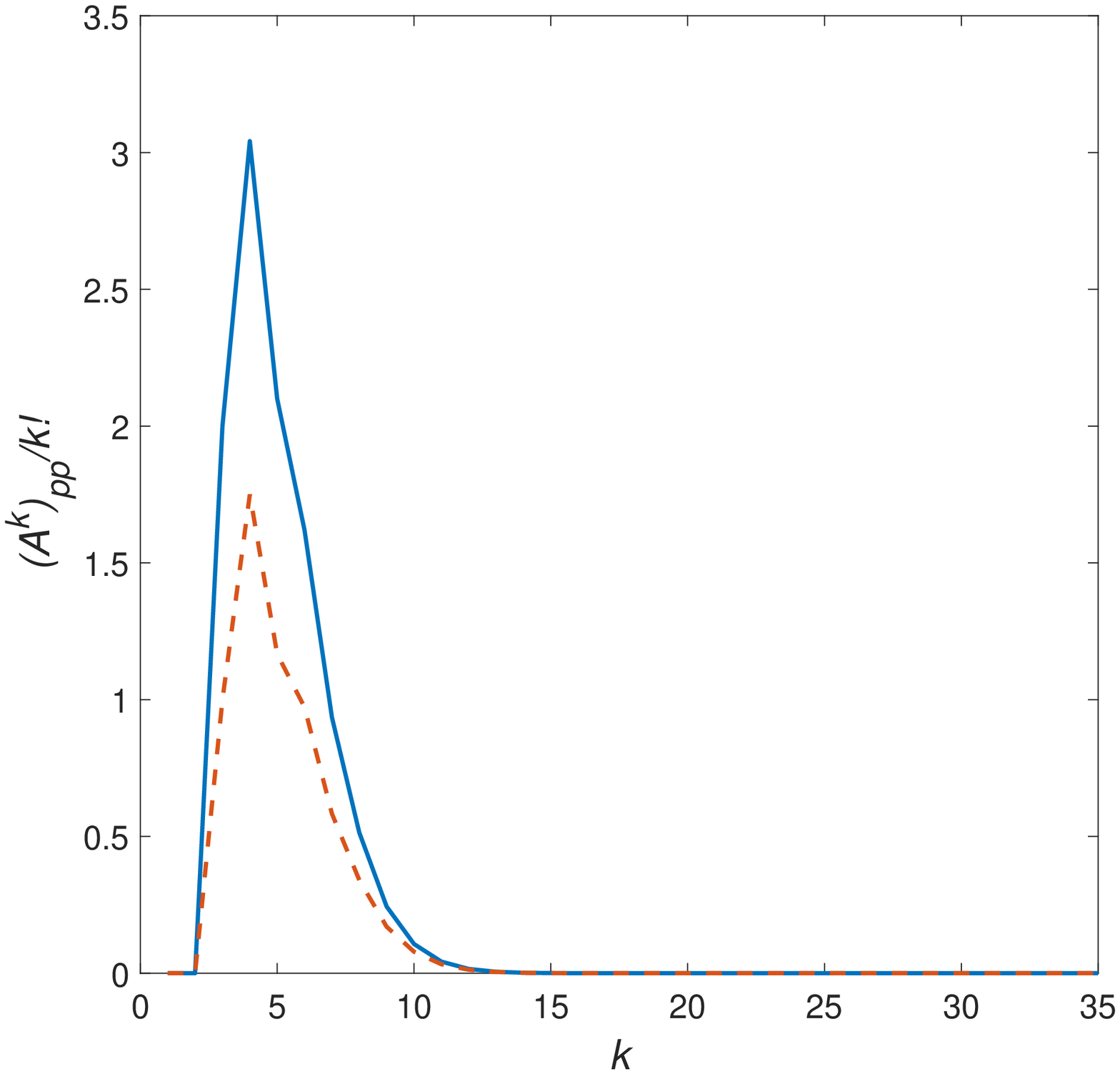}\includegraphics[width=0.33\textwidth]{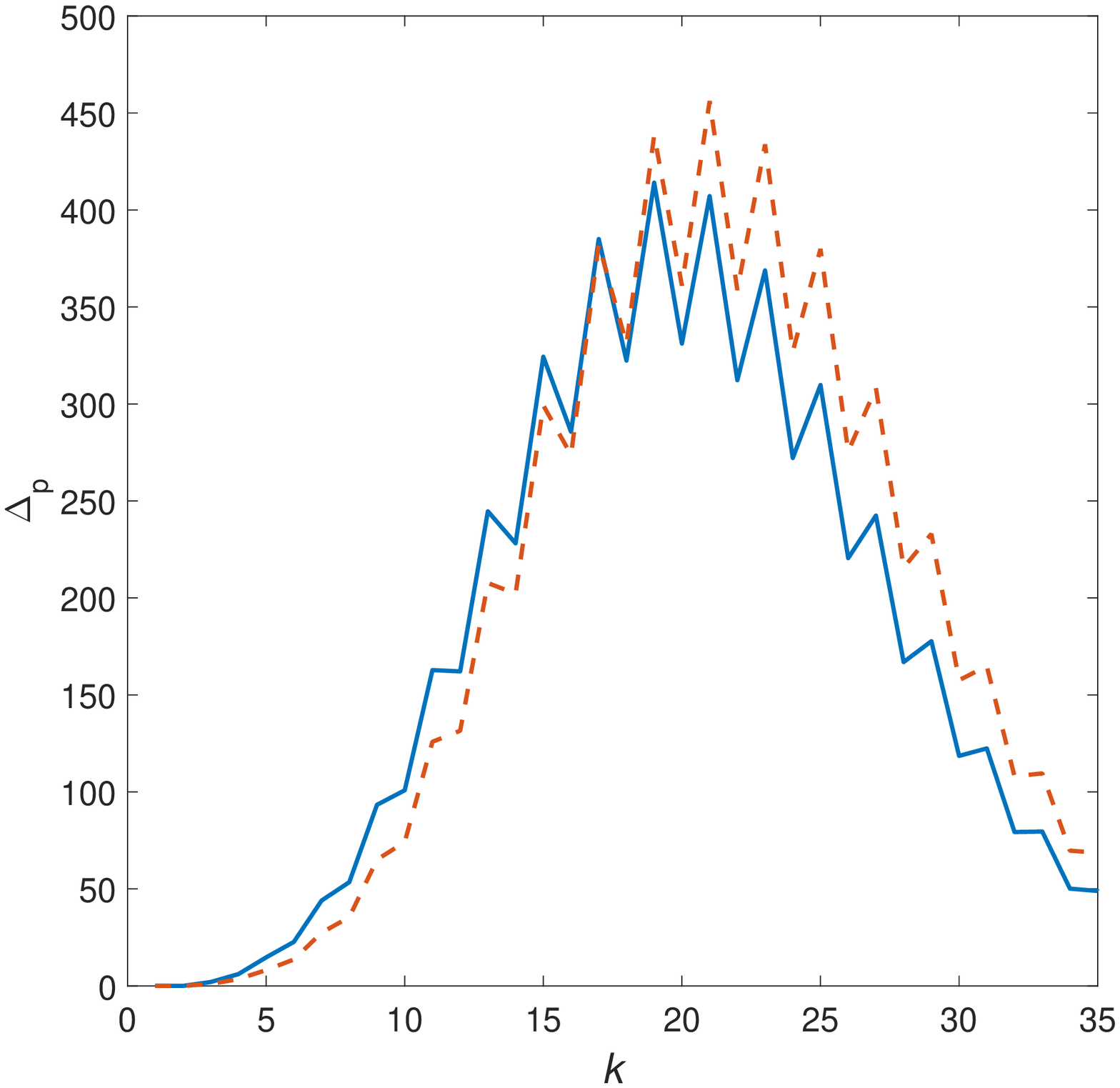}}
\par\end{centering}
\caption{Values of $c_{k}\left(A^{k}\right)_{pp}$ for $c_{k}=1/k!$ and $c_{k}=1/k!$!
for different values of $k$ in the triangular lattice illustrated
in Figure \ref{Chordless cycle} for two nodes. Blue (continuous)
line is for the node $B$ and red (broken) line is for the node $A$.
The panel on the right illustrates the values of the differences between
both types of penalizations for the two studied nodes (see text for
details).}

\label{Moments}
\end{figure}

\textcolor{black}{The consequences of the previous kind of situation
is that there is a different ranking of the nodes according to the
subgraph centrality (or communicability) based on the single and double-factorial
penalization. For instance, according to the single factorial penalization
$G_{AA}\approx9.1134$ and $G_{BB}\approx14.6272$. That is, the node
$B$ is more central than node $A$.. However, according to the double-factorial
penalization we obtain $\varGamma_{AA}\approx3038.6$ and $\Gamma_{BB}\approx2806.8$,
which indicates that indeed the node $A$ is more central than node
$B$. In many cases this difference in ranking is observed for many
pairs of nodes in a network, which produces a lack of correlation
between the corresponding parameters. In Figure \ref{Triangular correlations}
we illustrate the correlations between the subgraph centralities (left
panel) and communicability (right panel) for the nodes and pairs of
nodes, respectively, in the network illustrated in Figure \ref{Chordless cycle}.}

\textcolor{black}{}
\begin{figure}

\begin{centering}
\textcolor{black}{\includegraphics[width=0.45\textwidth]{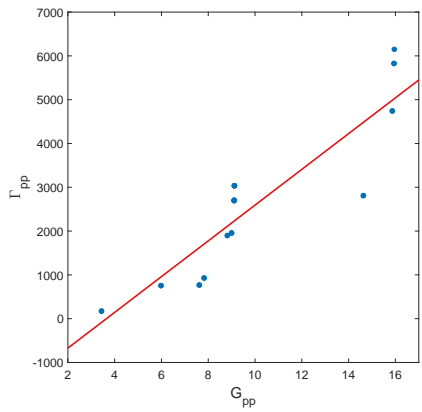}}
\par\end{centering}
\textcolor{black}{\caption{Scatterplot of the subgraph centrality (left panel) and communicability
(right panel) based on the single and double\textcolor{red}{-}\textcolor{black}{factorial
penalization for} all the nodes and pairs of nodes, respectively,
in the graph illustrated in Figure \ref{Triangular correlations}.}
}

\textcolor{black}{\label{Triangular correlations}}
\end{figure}

\textcolor{black}{The graph previously considered is a triangular
lattice, which is a planar graph. The situation previously considered
where a chordless cycle appears can be seen frequently in these type
of graphs and it is consequently of importance for studying certain
kinds of real-world networks as we will see in the next section. However,
such kinds of examples are not exclusive to planar graphs. As a simple
illustration we destroy the planarity of the graph in Figure \ref{Triangular correlations}
by adding a few edges but keeping the same chordless cycle as in the
original triangular lattice. As can be seen in Figure \ref{Nonplanar Lattice}
there is a total lack of correlation between the communicability obtained
with the single and double factorial for all pairs of nodes in this
graph. We will show more realistic examples from real-world networks
in the next section of the paper.}

\begin{figure}

\begin{centering}
\textcolor{black}{\includegraphics[width=0.75\textwidth]{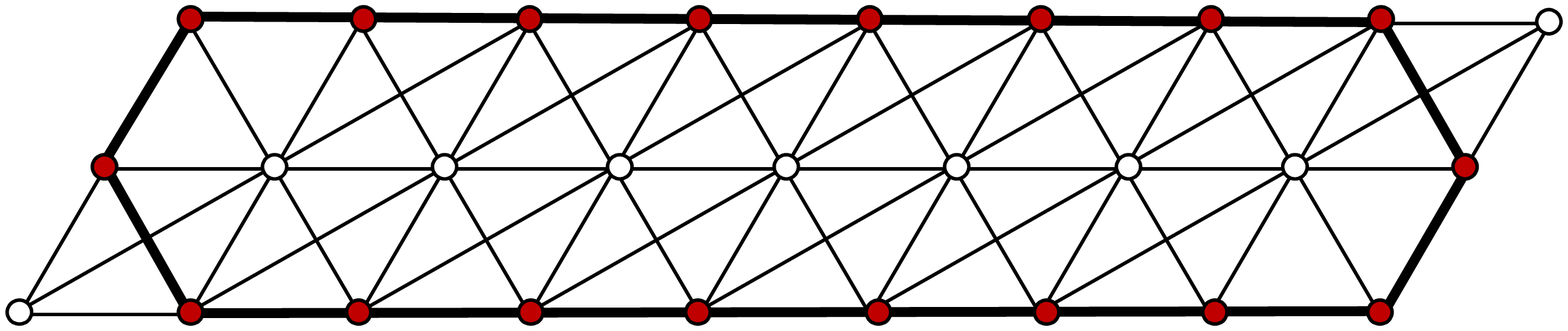}}
\par\end{centering}
\begin{centering}
\textcolor{black}{\includegraphics[width=0.55\textwidth]{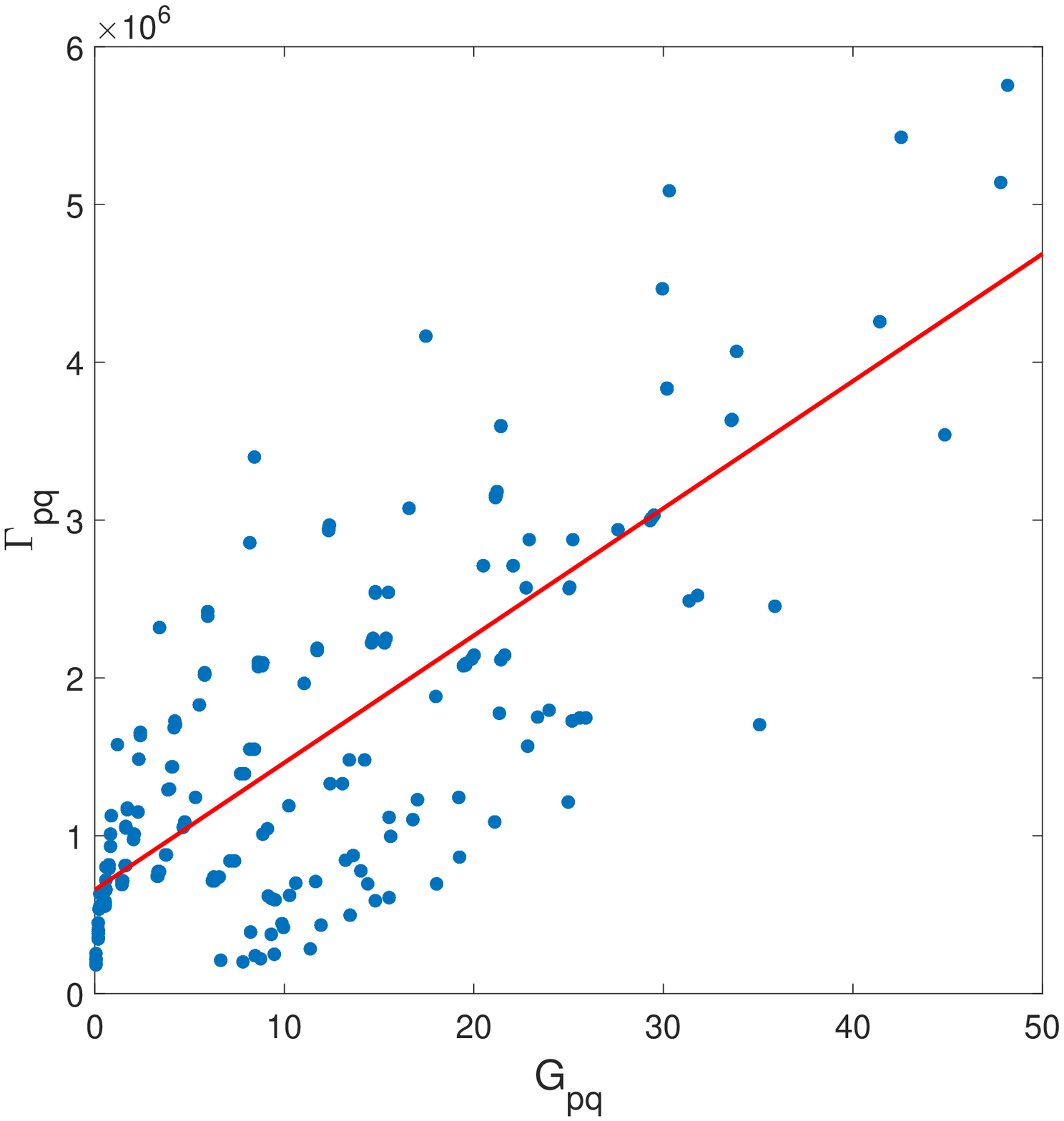}}
\par\end{centering}
\caption{(Top) Nonplanar graph obtained from the triangular lattice with 27
nodes illustrated in \ref{Triangular correlations}. The nodes marked
in red belong to a chordless cycle of length 18,\textcolor{red}{{} }\textcolor{black}{which
is highlighted with bolded edges. (}bottom) Scatterplot of the communicability
based on the single and double factorial for all pairs of nodes for
the graph in the top panel.}

\label{Nonplanar Lattice}
\end{figure}

\section{\textcolor{black}{Analysis of Real-World Networks }}

\subsection{\textcolor{black}{General analysis}}

\textcolor{black}{An important problem to be considered in practical
applications is that the entries of $\frac{A^{k}}{k!!}$ grow very
fast with $k$ in large graphs with relatively high density. Although
most real-world networks are sparse, the calculation of indices based
on $D\left(A\right)$ can be affected by the presence of these very
large numbers. For instance, for a network representing the synaptic
connections among the neurons of the worm }\textit{\textcolor{black}{C.
elegans}}\textcolor{black}{, which has $n=280$ nodes and edge density
$d=0.0505$ the entries of $D\left(A\right)$ are bigger than $10^{110}$,
which far exceeds the largest finite floating-point number in IEEE
single precision ($10^{38}$), but is still below the largest finite
floating-point number in IEEE double precision ($10^{308}$). However,
for the network representing the USA system of airports having $n=332$
nodes and $d=0.0387$ the entries of $D\left(A\right)$ exceed this
maximum floating number and a program like Matlab\textregistered{}
returns infinity for all its entries. In those cases the adjacency
matrix can be multiplied by $\beta<1$ in order to reduce the magnitude
of the entries of $D\left(A\right)$ as we will illustrate in some
of the examples in this section. We then study the influence of this
parameter $\beta$ on the function $D\left(A\right)$.}

\textcolor{black}{In this subsection we study networks that do not
contain significantly large chordless cycles. We now conduct a computational
study of the index $\Gamma\left(G,\beta\right)$ of all connected
graphs with $n=4,5,6,7,8$ nodes and compare it with the index $EE\left(G,\beta\right)$
for values $0<\beta\leq1$. In Figure \ref{Estrada indices with 8 nodes}(a)
we illustrate the correlation between the two indices for $\beta=1$
that show the existence of a power-law relation between them. However,
by zooming into the smallest valued region of the indices\textemdash this
region corresponds to graphs with relatively low density of edges\textemdash it
is revealed that such a correlation between the two indices is far
from being simple (see (\ref{Estrada indices with 8 nodes}(b)). This
reveals the fact that decreasing the penalization of the walks in
graphs from the factorial to the double-factorial make non-trivial
changes in the ordering of the graphs, particularly for graphs with
relatively low density of edges. This is very important as most real-world
networks are sparse and we should expect significant differences between
the two different indices for them. More interestingly, we plot the
two indices for $\beta=0.1$ in (\ref{Estrada indices with 8 nodes}(c))
where it can be observed that the correlation between the two indices
have now been dramatically decreased.}

\textcolor{black}{}
\begin{figure}

\begin{centering}
\textcolor{black}{\includegraphics[width=0.33\textwidth]{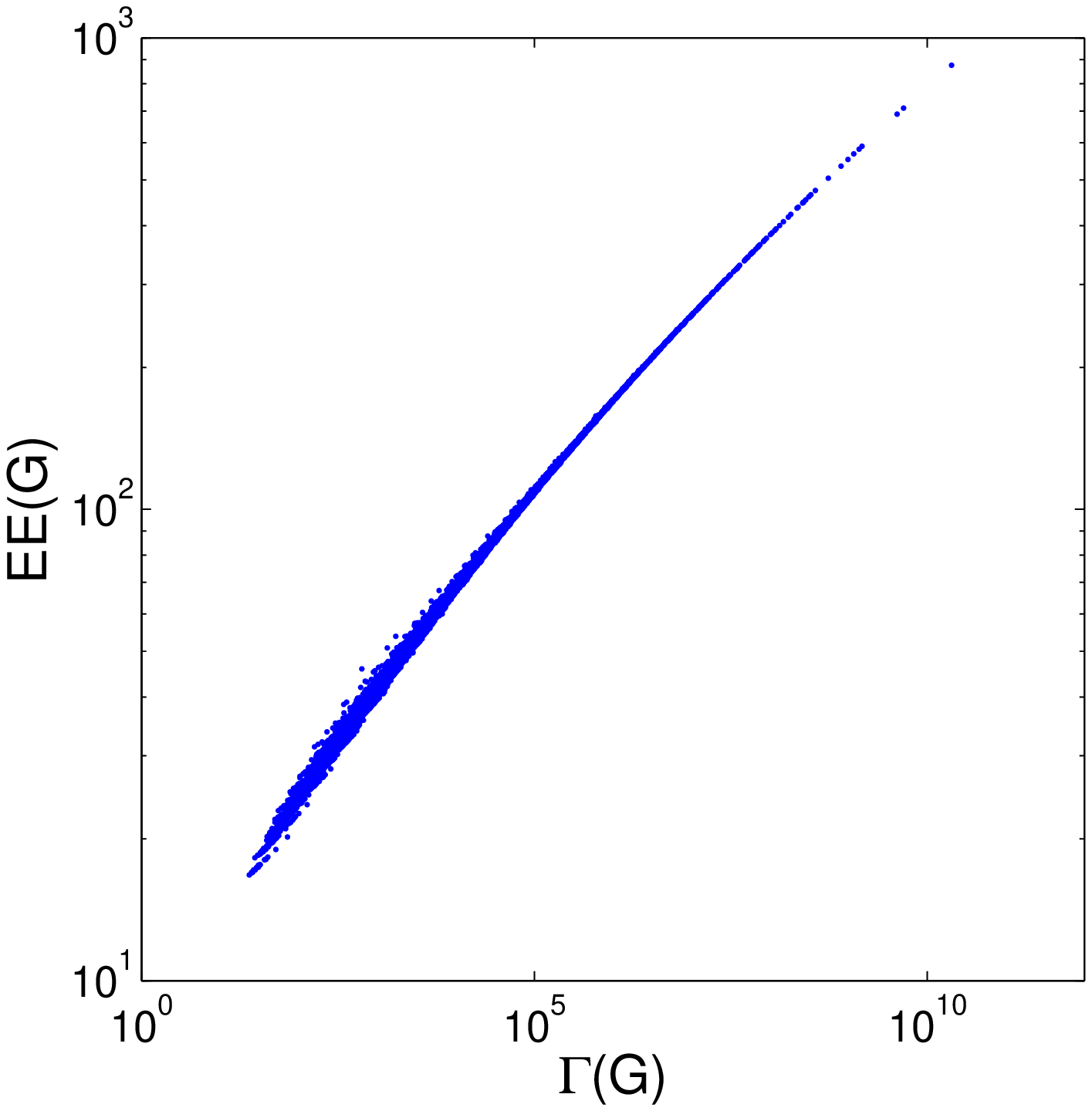}\includegraphics[width=0.33\textwidth]{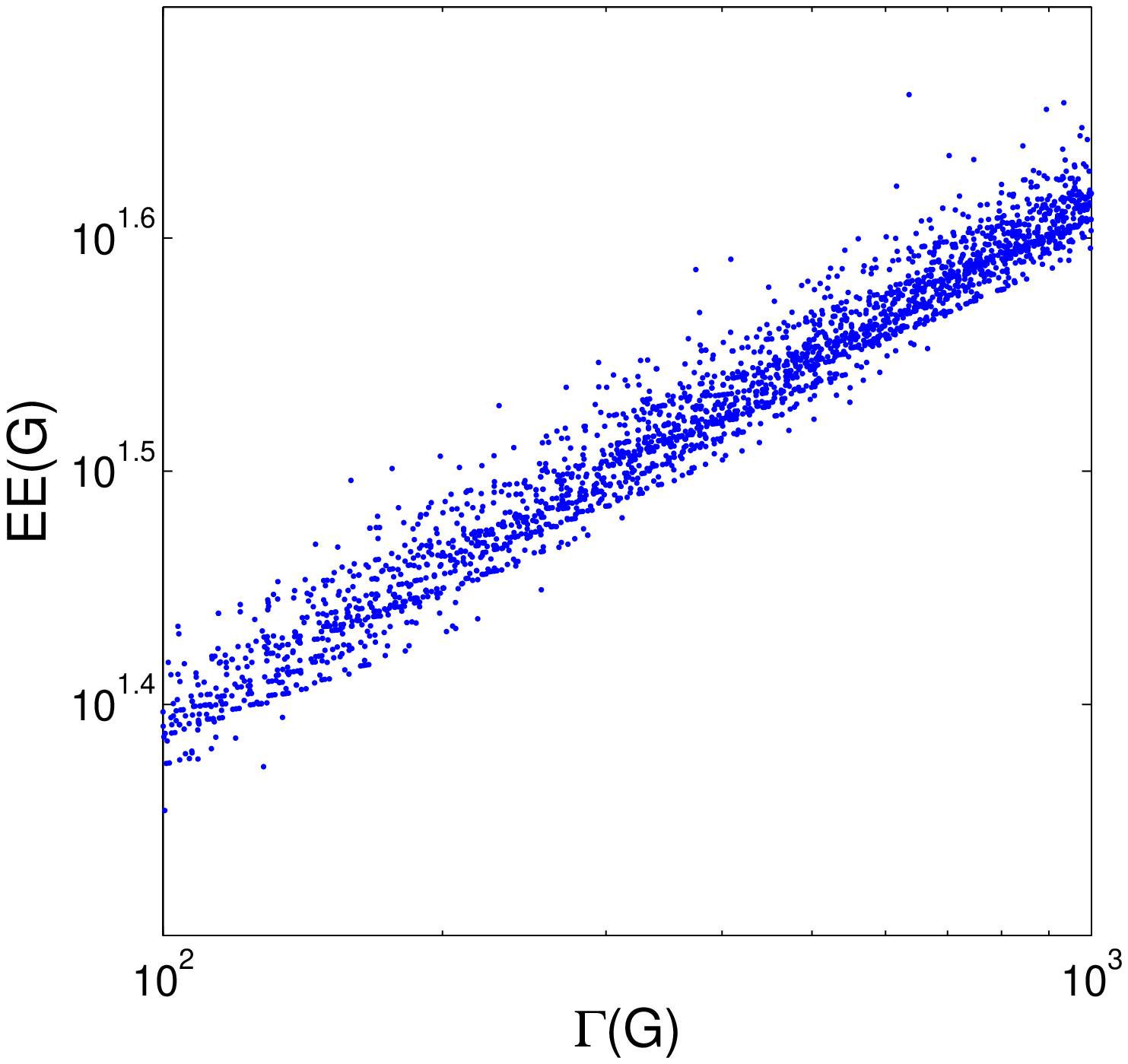}\includegraphics[width=0.33\textwidth]{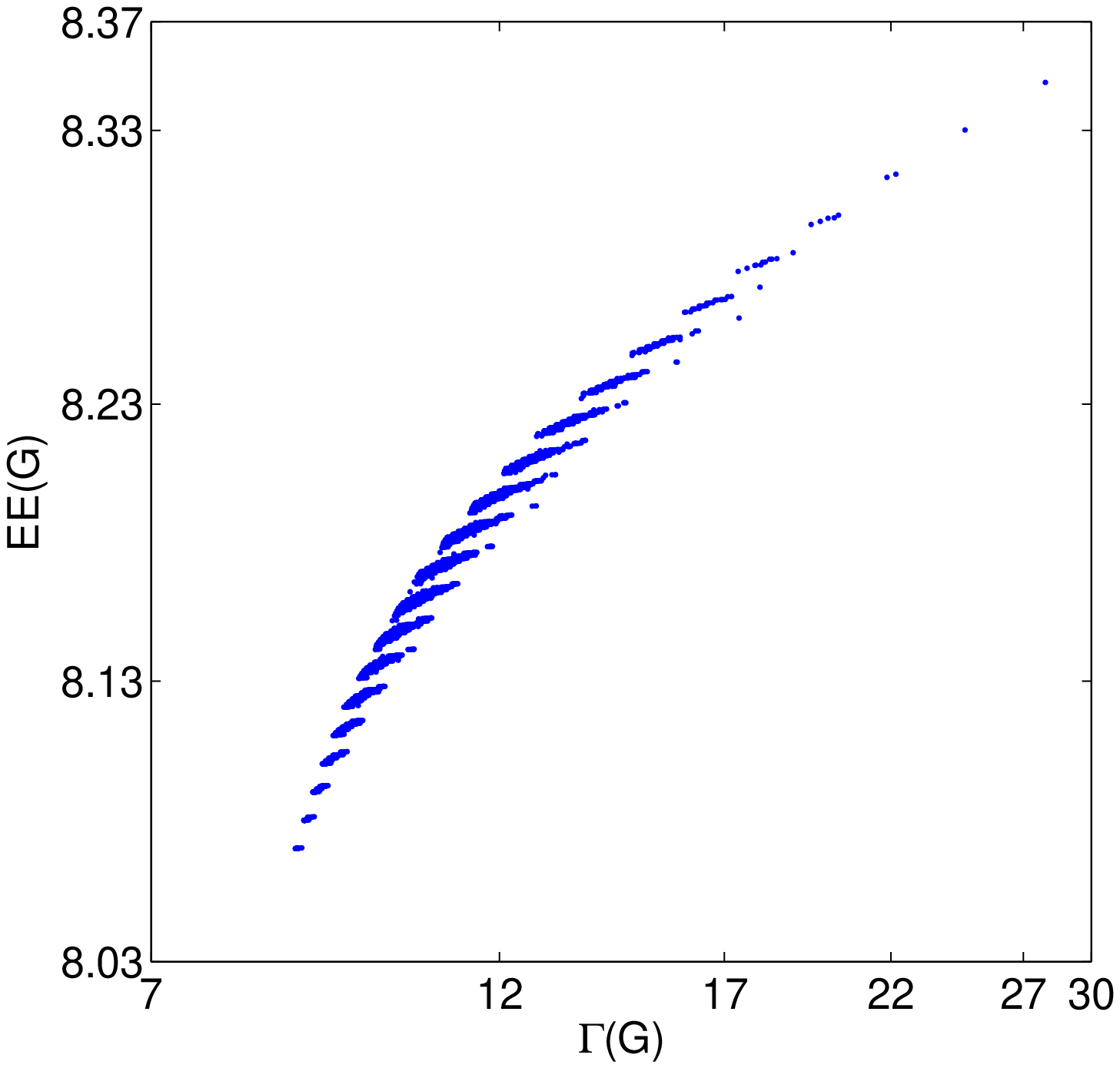}}
\par\end{centering}
\textcolor{black}{\caption{(a) Scatterplot of the indices $EE\left(G,\beta\right)$ and $\Gamma\left(G,\beta\right)$
for all the 11,117 connected graphs on 8 nodes using $\beta=1$. (b)
\textcolor{black}{Magnified plot of the region}\textcolor{red}{{} }$100\leq\Gamma\left(G\right)\leq1000$
for the same plot as in (a). (c) The same as in (a) but using $\beta=0.1$.}
}

\textcolor{black}{\label{Estrada indices with 8 nodes}}
\end{figure}

\textcolor{black}{In order to understand this decay in the correlation
between the two indices we express them in terms of the eigenvalues
of the adjacency matrix:}

\textcolor{black}{
\begin{equation}
EE\left(G,\beta\right)=\sum_{j=1}^{n}\textrm{exp}\left(\beta\lambda_{j}\right),
\end{equation}
}

\textcolor{black}{
\begin{equation}
\Gamma\left(G,\beta\right)=\sum_{j=1}^{n}\textrm{exp}\left(\frac{\beta^{2}\lambda_{j}^{2}}{2}\right)+\sqrt{\dfrac{\pi}{2}}\sum_{j=1}^{n}\tanh\left(\frac{k\beta\lambda_{j}}{\sqrt{2}}\right)\textrm{exp}\left(\frac{\beta^{2}\lambda_{j}^{2}}{2}\right).\label{eq:gamma eigenvalues}
\end{equation}
}

\textcolor{black}{It is easy to see that when $\beta\rightarrow\infty$
both indices are dominated by the principal eigenvalue (spectral radius)
of the adjacency matrix, i.e.,}

\textcolor{black}{
\begin{equation}
EE\left(G,\beta\rightarrow\infty\right)=\textrm{exp}\left(\beta\lambda_{1}\right),
\end{equation}
}

\textcolor{black}{
\begin{eqnarray}
\Gamma\left(G,\beta\rightarrow\infty\right) & = & \textrm{exp}\left(\frac{\beta^{2}\lambda_{1}^{2}}{2}\right)+\sqrt{\dfrac{\pi}{2}}\tanh\left(\frac{k\beta\lambda_{1}}{\sqrt{2}}\right)\textrm{exp}\left(\frac{\beta^{2}\lambda_{1}^{2}}{2}\right)\\
 & \simeq & \left(1+\sqrt{\frac{\pi}{2}}\right)\textrm{exp}\left(\frac{\beta^{2}\lambda_{1}^{2}}{2}\right),
\end{eqnarray}
due to the fact that $\tanh\left(x\right)\approx1$ for $x>5$. Then,
it is evident that both indices are highly correlated. On the contrary,
when $\beta\rightarrow0$, the second term of (\ref{eq:gamma eigenvalues})
becomes more relevant. First of all, in this case the term $\tanh\left(x\right)$
is smaller than one for many of the eigenvalues of the network, which
means that a larger number of eigenvalues and not only those close
to zero make a contribution to this part of the function. Although
the first term of (\ref{eq:gamma eigenvalues}) may still correlate
with $EE\left(G,\beta\right)$, the second term does not, which results
in a lack of global correlation between $\Gamma\left(G,\beta\right)$
and $EE\left(G,\beta\right)$ when $\beta\rightarrow0$. This lack
of correlation for small values of $\beta$ will be useful for the
study of some of the indices derived from the matrix function $D\left(A\right)$.}

\textcolor{black}{We now study a group of 61 real-world networks representing
social, biological, ecological, infrastructural and technological
systems. We first illustrate the correlation between $\Gamma\left(G,\beta\right)$
and $EE\left(G,\beta\right)$ when $\beta=0.2$. To avoid size effects
we normalized both indices by dividing their logarithms by the number
of nodes. As can be seen in Figure \ref{real world networks} in general
there is a good correlation between the two indices except for a few
networks\textendash about one third of the total number of networks
studied\textemdash which display large deviations from the linear
trend observed. That is, there are 19 networks for which $\left(\log\Gamma\left(G,\beta=0.2\right)\right)/n$
is significantly larger than expected from the linear correlation
between this index and $\left(\log EE\left(G,\beta=0.2\right)\right)/n$.
Excluding these 19 outliers the Pearson correlation coefficient between
the two indices is $0.999$. We have calculated the average Watts-Strogatz
clustering coefficient and the global transitivity of all the network
studied. They have average values for the 61 networks studied of $0.259$
and $0.203$, respectively. However, if we consider the networks that
deviate significantly from the linear correlation between the two
indices, the clustering coefficients have average values of $0.415$
and $0.337$, respectively, which are much higher than the average
observed for the total networks studies. Indeed, if we only consider
those networks for which there is a perfect fit between the two indices
studied we obtain average clustering coefficients of $0.187$ and
$0.140$, which confirms that the `anomalous' behavior is observed
for networks with the highest clustering coefficients among all the
networks studied.}

\textcolor{black}{If we consider the difference between the two indices
studied we have}

\textcolor{black}{
\begin{equation}
\Gamma\left(G,\beta\right)-EE\left(G,\beta\right)=\dfrac{\beta^{3}}{6}tr\left(A^{3}\right)+\dfrac{\beta^{4}}{12}tr\left(A^{4}\right)+\dfrac{7\beta^{5}}{120}tr\left(A^{5}\right)+\cdots,
\end{equation}
which clearly indicates that the first term is the one having the
largest contribution. We recall that $t=\dfrac{1}{6}tr\left(A^{3}\right),$
where $t$ is the number of triangles. Then, when $\beta\rightarrow0$
the number of triangles has the largest influence in the difference
between the two indices. Consequently, those networks having the largest
clustering\textemdash which account for the relative abundance of
triangles\textemdash display the largest difference between the two
indices among all the networks studied.}

\textcolor{black}{}
\begin{figure}

\begin{centering}
\textcolor{black}{\includegraphics[width=0.75\textwidth]{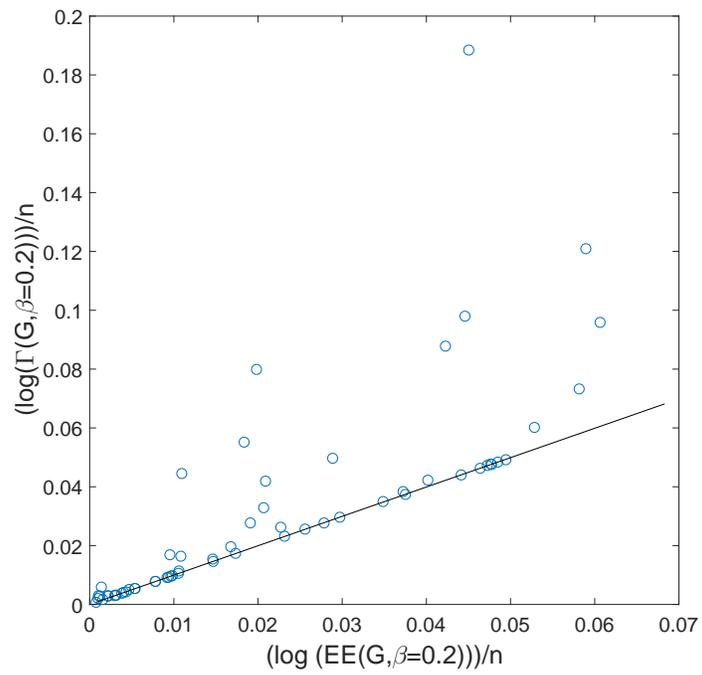}}
\par\end{centering}
\textcolor{black}{\caption{Correlation between the logarithms of $\Gamma\left(G,\beta=0.2\right)$
and $EE\left(G,\beta=0.2\right)$ normalized by the number of nodes
for 61 real-world networks.}
}

\textcolor{black}{\label{real world networks}}
\end{figure}

\subsection{\textcolor{black}{Centrality}}

\textcolor{black}{One of the most important uses of matrix functions
in the study of networks is the definition of centrality indices.
The double-factorial subgraph centrality is defined as the diagonal
entries of the matrix function $D\left(A\right)$, which can be expressed
in terms of the eigenvalues and eigenvectors of the adjacency matrix
as}

\textcolor{black}{
\begin{eqnarray}
\Gamma_{pp}\left(G,\beta\right) & =\sum_{j=1}^{n}\psi_{j,p}^{2}\left[\sqrt{\dfrac{\pi}{2}}\tanh\left(\frac{k\beta\lambda_{j}}{\sqrt{2}}\right)+1\right]\textrm{exp}\left(\frac{\beta^{2}\lambda_{j}^{2}}{2}\right).
\end{eqnarray}
}

\textcolor{black}{The way the double-factorial Estrada index is correlated
to $EE\left(G,\beta\right)$ for large values of $\beta$ is similar
to how the double-factorial subgraph centrality $\Gamma_{pp}\left(G,\beta\right)$
is also correlated to the subgraph centrality $G_{pp}\left(G,\beta\right)$
for large values of $\beta$. In Figure \ref{subgraph centralities}
we illustrate the correlations between the subgraph centralities $G_{pp}\left(G,\beta\right)$
and $\Gamma_{pp}\left(G,\beta\right)$ for the protein-protein interaction
network of yeast (top plots) and the network of directors in the corporate
elite in US (bottom plots). As can be seen in the plots on the left
hand side of the Figure, for $\beta=1$ there is a very good linear
relation between both centralities as expected from the fact that
they can both be approximated by}

\textcolor{black}{
\begin{equation}
G_{pp}\left(G,\beta\rightarrow\infty\right)=\psi_{1,p}^{2}\textrm{exp}\left(\beta\lambda_{1}\right),
\end{equation}
}

\textcolor{black}{
\begin{eqnarray}
\Gamma_{pp}\left(G,\beta\rightarrow\infty\right) & = & \psi_{1,p}^{2}\exp\left(\frac{\beta^{2}\lambda_{1}^{2}}{2}\right)\\
 & + & \textrm{\ensuremath{\sqrt{\dfrac{\pi}{2}}\tanh\left(\frac{k\beta\lambda_{1}}{\sqrt{2}}\right)\textrm{exp}\left(\frac{\beta^{2}\lambda_{1}^{2}}{2}\right)}}\\
 & \simeq & \left(1+\sqrt{\frac{\pi}{2}}\right)\psi_{1,p}^{2}\textrm{exp}\left(\frac{\beta^{2}\lambda_{1}^{2}}{2}\right).
\end{eqnarray}
}

\textcolor{black}{However, when $\beta\rightarrow0$, as in the right
hand side plots of Figure \ref{subgraph centralities}, this correlation
disappears and both indices differ significantly for several nodes
in these networks. The reason for this difference is analogous to
the one explained in the previous section for the corresponding Estrada
indices.}

\textcolor{black}{}
\begin{figure}
\begin{centering}
\textcolor{black}{\includegraphics[width=0.5\textwidth]{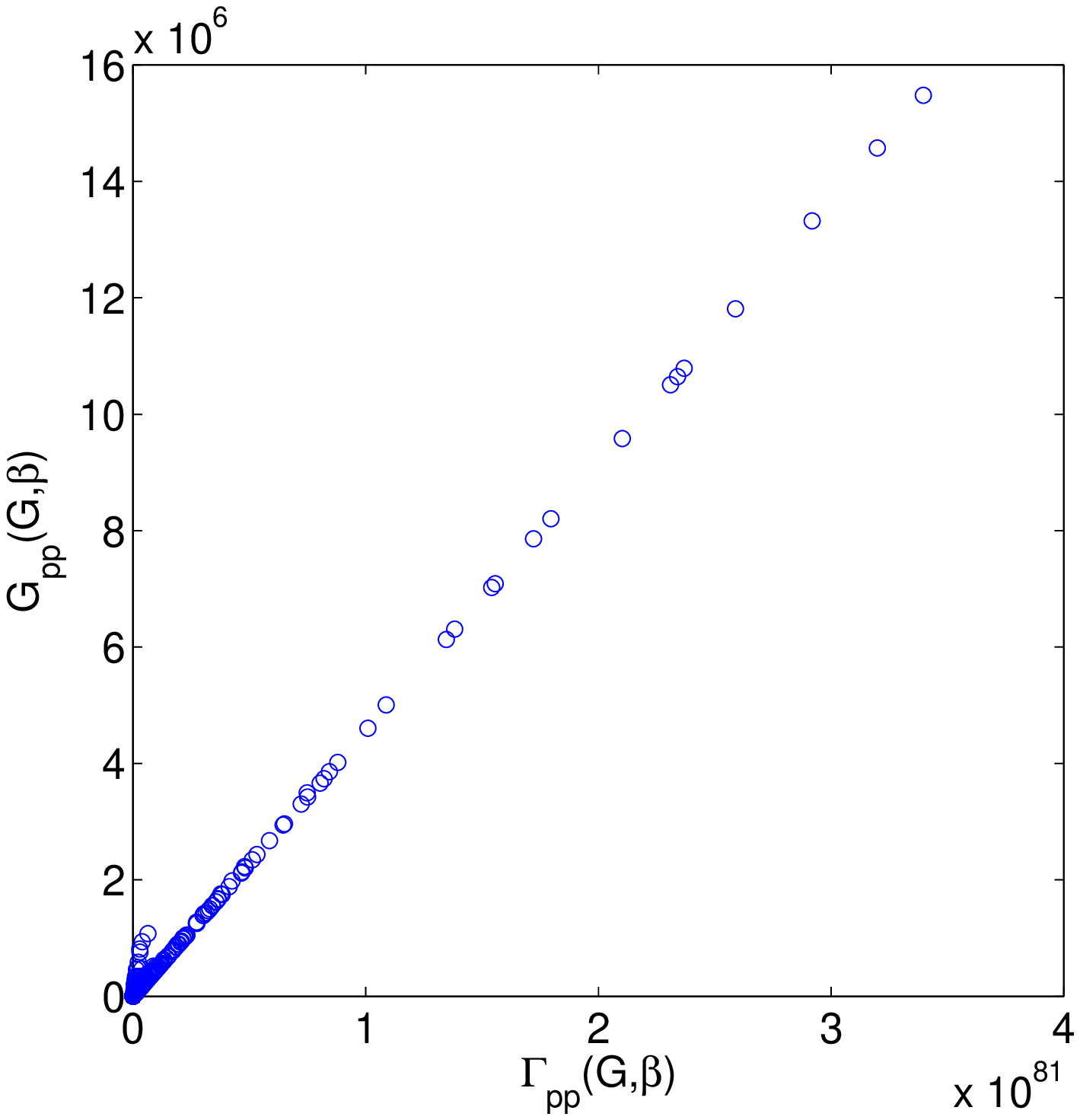}\includegraphics[width=0.5\textwidth]{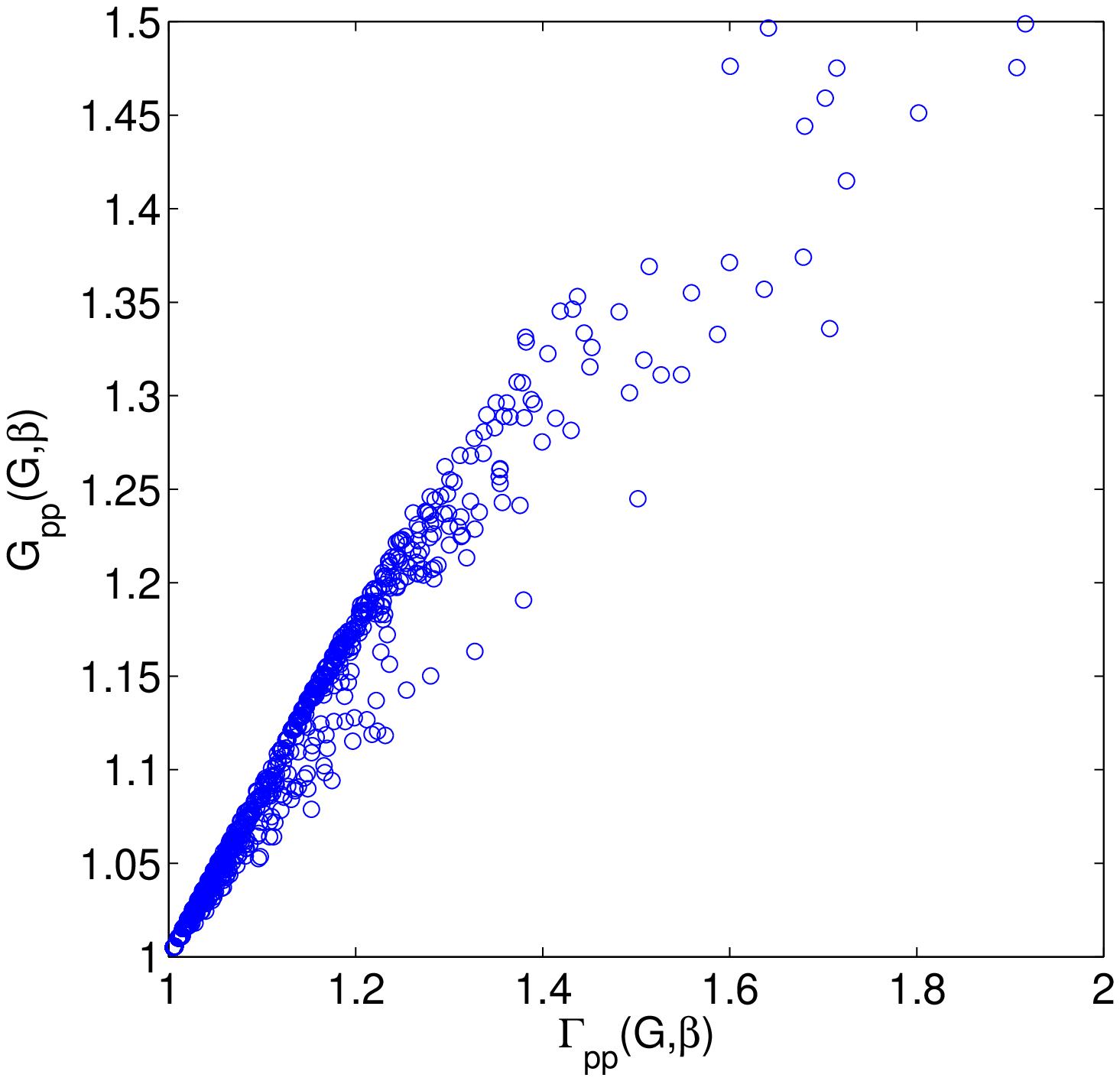}}
\par\end{centering}
\begin{centering}
\textcolor{black}{\includegraphics[width=0.5\textwidth]{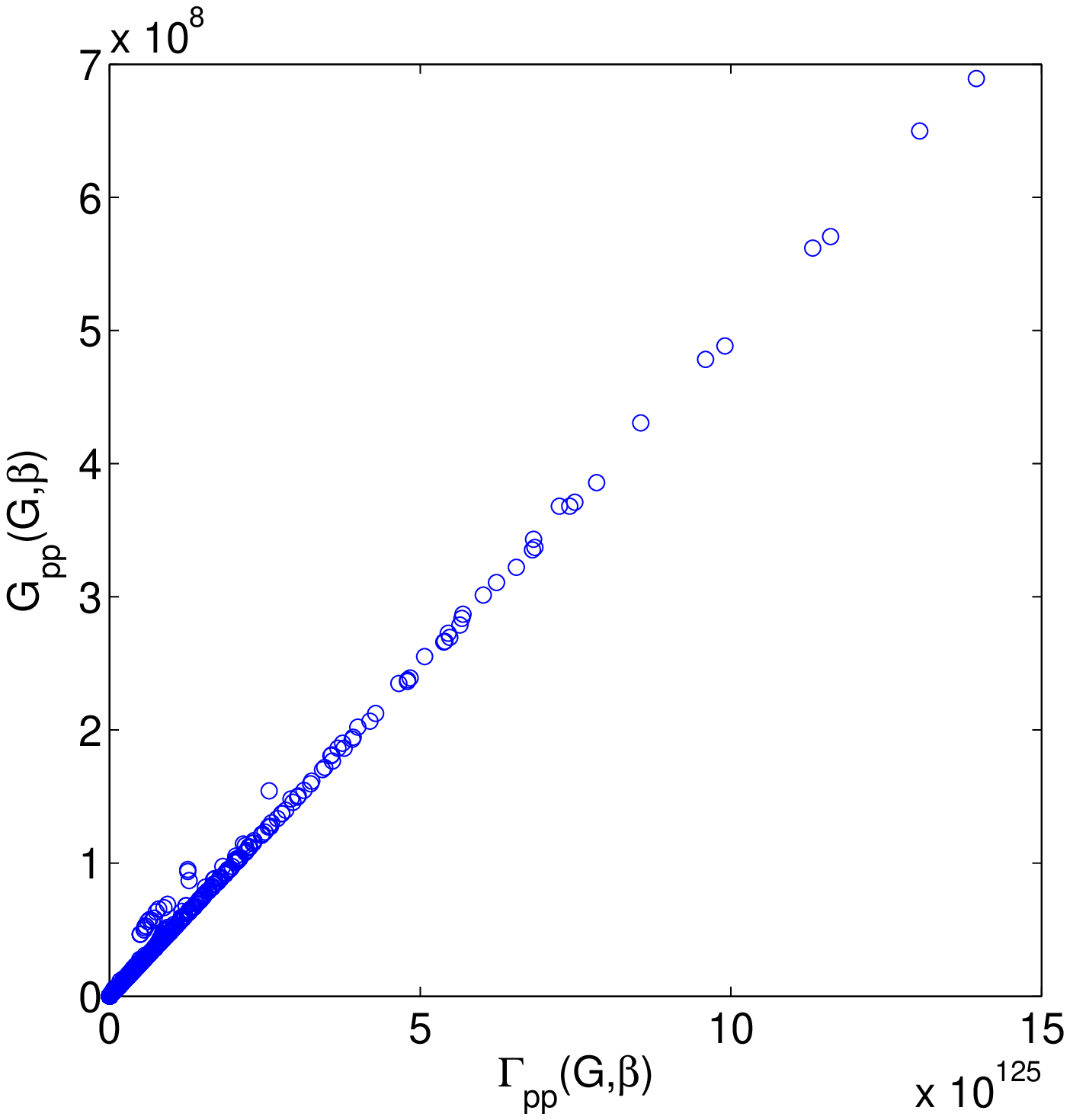}\includegraphics[width=0.5\textwidth]{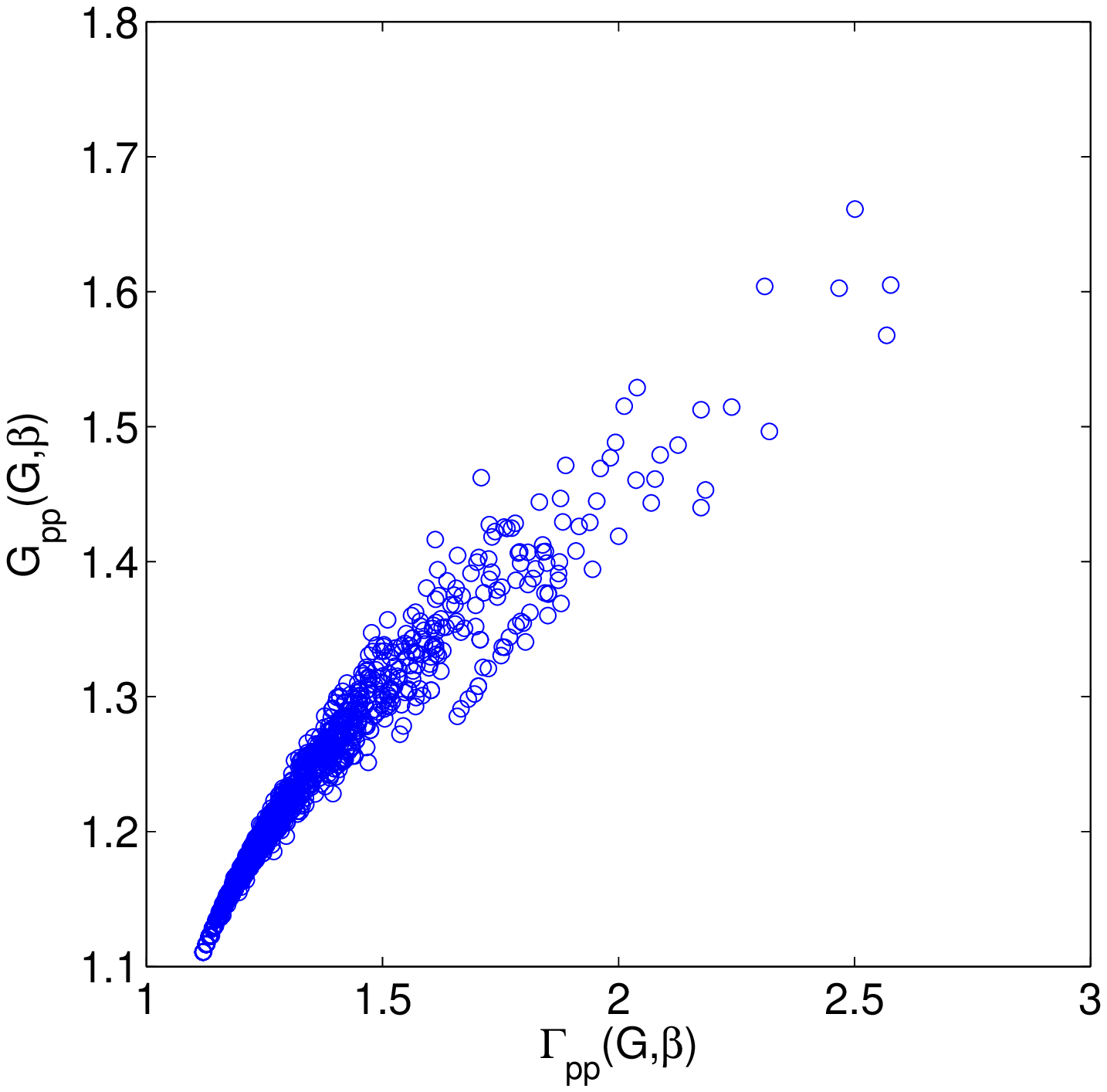}}
\par\end{centering}
\textcolor{black}{\caption{Correlation between the subgraph centrality based on the exponential
matrix function $G_{pp}\left(G,\beta\right)$ and on the matrix function
$D\left(A\right)$, $\Gamma_{pp}\left(G,\beta\right)$ for the protein-protein
interaction network of yeast (top plots) and the network of directors
in the corporate elite in US (bottom plots). The plots on the left
are for $\beta=1$ and on the right for $\beta=0.1$.}
}

\textcolor{black}{\label{subgraph centralities}}
\end{figure}

\textcolor{black}{In order to study how significant the differences
between $G_{pp}\left(G,\beta\right)$ and $\Gamma_{pp}\left(G,\beta\right)$
are for relevant network properties we consider the following problem.
We consider the identification of essential proteins in the protein-protein
interaction network of yeast \cite{PIN_1}. Essential proteins are
those for which if the corresponding gene is knocked out the entire
cell dies. Thus, they are considered to be essential for the survival
of the corresponding organisms. In this case we study how many essential
proteins exists in the top 10\% of proteins ranked according to a
given centrality index. The hypothesis behind this experiment is that
the most central proteins have higher probability of being essential.
Consequently, we rank all the proteins in the yeast PIN according
to $G_{pp}\left(G,\beta\right)$ and $\Gamma_{pp}\left(G,\beta\right)$
for $0\leq\beta\leq1$ with step $0.01$. We then select the top 10\%
of these proteins and count how many of them are essential. The results
for the two centrality indices considered here are illustrated in
Figure \ref{PIN yeast}(a) where it can be seen that both indices
reach the same maximum number of 115 essential proteins identified.
However, while $G_{pp}\left(G,\beta\right)$ reaches this maximum
for $0.47\leq\beta\leq0.57$, the maximum is reached by $\Gamma_{pp}\left(G,\beta\right)$
for $\beta=0.18$. In the Figure \ref{PIN yeast}(b) we illustrate
the receiving operating characteristic (ROC) for the classification
of the essential proteins using both indices. }

\textcolor{black}{}
\begin{figure}
\begin{centering}
\textcolor{black}{\includegraphics[width=0.5\textwidth]{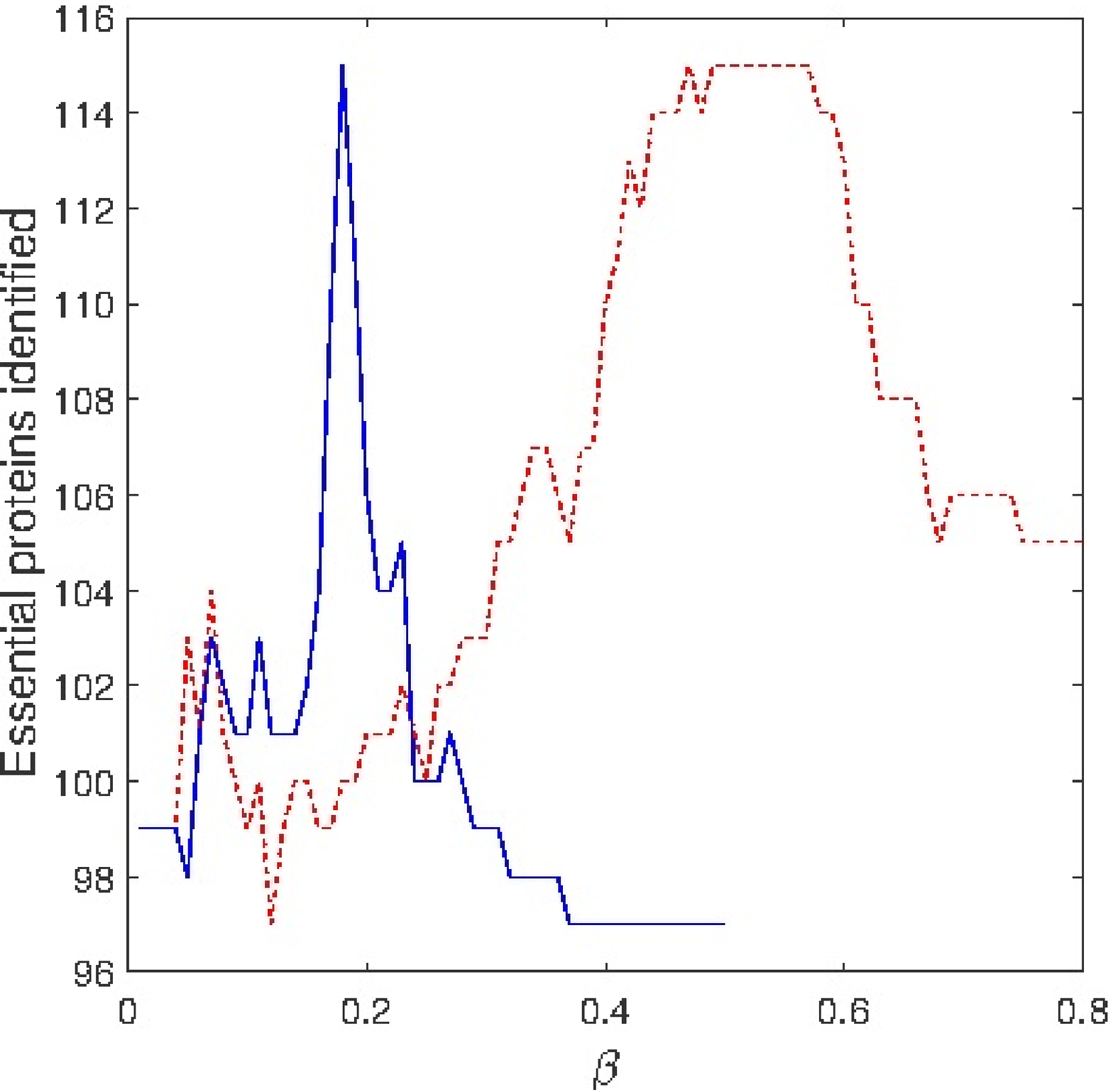}\includegraphics[width=0.5\textwidth]{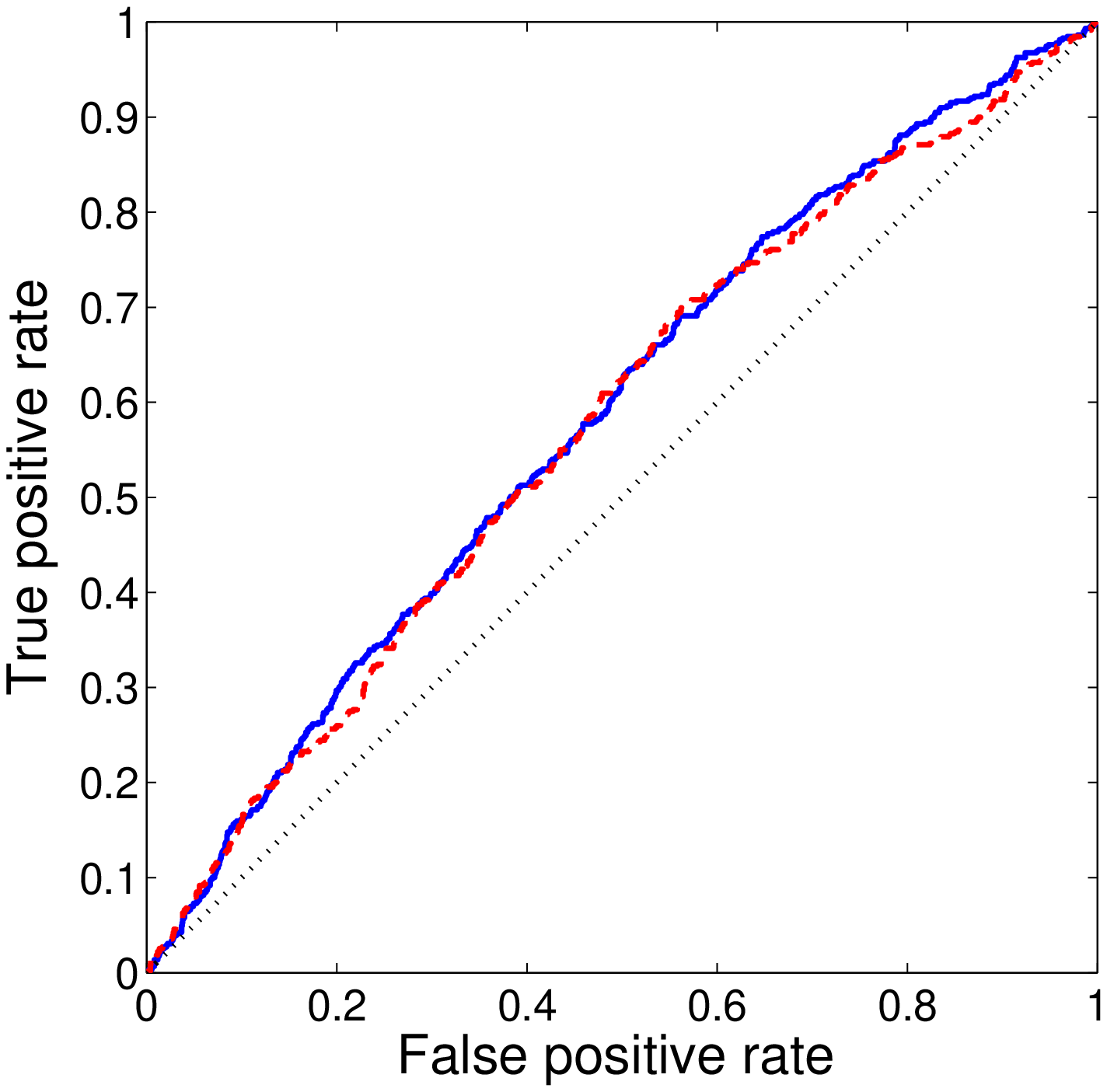}}
\par\end{centering}
\begin{centering}
\textcolor{black}{\caption{(a) Number of essential proteins identified by $G_{pp}\left(G,\beta=0.5\right)$
(red broken curve) and by $\Gamma_{pp}\left(G,\beta=0.18\right)$
(blue continuous line). (b) Illustration of the ROC curves for the
classification of essential proteins in yeast protein interaction
network (PPI) using $G_{pp}\left(G,\beta=0.5\right)$ (red broken
line) and $\Gamma_{pp}\left(G,\beta=0.18\right)$ (blue continuous
line).}
}
\par\end{centering}
\textcolor{black}{\label{PIN yeast}}
\end{figure}

\textcolor{black}{Apart from the visual similarities which are evident
from a simple inspection of the curves, the quantitative analysis
also indicates that there are no significant differences in the quality
of the classification using these indices. For instance, the area
below the curves for the classification of essential proteins in yeast
protein interaction network (PPI) using $G_{pp}\left(G,\beta=0.5\right)$
and $\Gamma_{pp}\left(G,\beta=0.18\right)$ are both 0.69. We can
see in (\ref{Correlindice}), the indices highly correlate for higher
values of $\beta_{1},\beta_{2}$ and also for small $\beta_{1},\beta_{2}$,
provided they are close together. In closing, there are no significant
differences in the quality of the classification models using $G_{pp}\left(G,\beta\right)$
and $\Gamma_{pp}\left(G,\beta\right)$ when the appropriate values
of $\beta$ are considered. For the sake of comparison we give the
values of essential proteins identified by other centrality indices:
eigenvector (97); degree (86); closeness (77); betweenness (71) and
a random ranking of the proteins identifies 52 essential proteins. }

\textcolor{black}{}
\begin{figure}
\begin{centering}
\textcolor{black}{\includegraphics[width=1\textwidth]{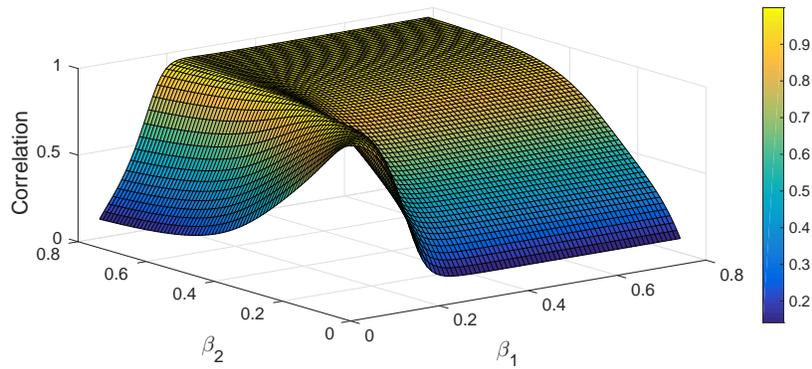}}
\par\end{centering}
\textcolor{black}{\caption{Correlation between the diagonal entries of the protein-protein interaction
network of yeast based on the exponential matrix function $G_{pp}\left(G,\beta_{1}\right)$
and on the matrix function $D\left(A\right)$, $\Gamma_{pp}\left(G,\beta_{2}\right)$
for $\beta_{1},\beta_{2}$ between $0$ and $0.75$ (step size of
$0.01$). }
}

\textcolor{black}{\label{Correlindice}}
\end{figure}

The main conclusion of this subsection is that in the case of networks
that do not contain significantly large chordless cycles, the double
factorial penalization of walks can produce similar results as the
indices using single-factorial one when the change of the parameter
$\beta$ is allowed. In the next section we will explore the differences
observed between these two schemes for networks containing significantly
large holes in their structures.

\subsection{\textcolor{black}{Networks with holes}}

\textcolor{black}{As we have analyzed in Section 5 the graphs containing
holes, i.e., chordless cycles, in their structures display a different
ranking of the nodes according to the indices developed from the single-
and the double-factorial penalization of walks. This situation is
frequently observed in real-world networks. A couple of very typical
examples are the}\textit{\textcolor{black}{{} urban street networks}}\textcolor{black}{{}
\cite{spatial networks} and the }\textit{\textcolor{black}{protein
residue networks}}\textcolor{black}{{} (see for instance \cite{EstradaBook}).
In the first case, the streets of a city are represented as the edges
of the network and their intersections are represented by the nodes.
In the second case, the nodes represent the $\alpha$-carbons of the
amino acids in the protein and two nodes are connected if the corresponding
amino acids are at a distance that allows their physical interaction.
In urban street networks\textemdash see Figure \ref{city streets}\textemdash the
holes are regions without streets, such as parks, big stores or natural
environments like ponds. In the protein residue networks the holes
are binding sites\textemdash regions in which amino acids are spatially
separated to allocate other molecules\textemdash for small organic
molecules or other proteins. A notable difference between the two
systems is that while the first are represented mainly by planar graphs,
the second is represented mainly by nonplanar ones. The determination
of holes in networks is not a trivial problem and many efforts are
directed to this goal due to the importance of these topological features
in real-world systems \cite{holes in graphs,chordless,homotopy}.
Information about whether the network contains holes or not can be
obtained by means of the so-called ``spectral scaling method'' \cite{spectral scaling,structural classes}. }

\textcolor{black}{}
\begin{figure}
\begin{centering}
\textcolor{black}{\includegraphics[width=0.45\textwidth]{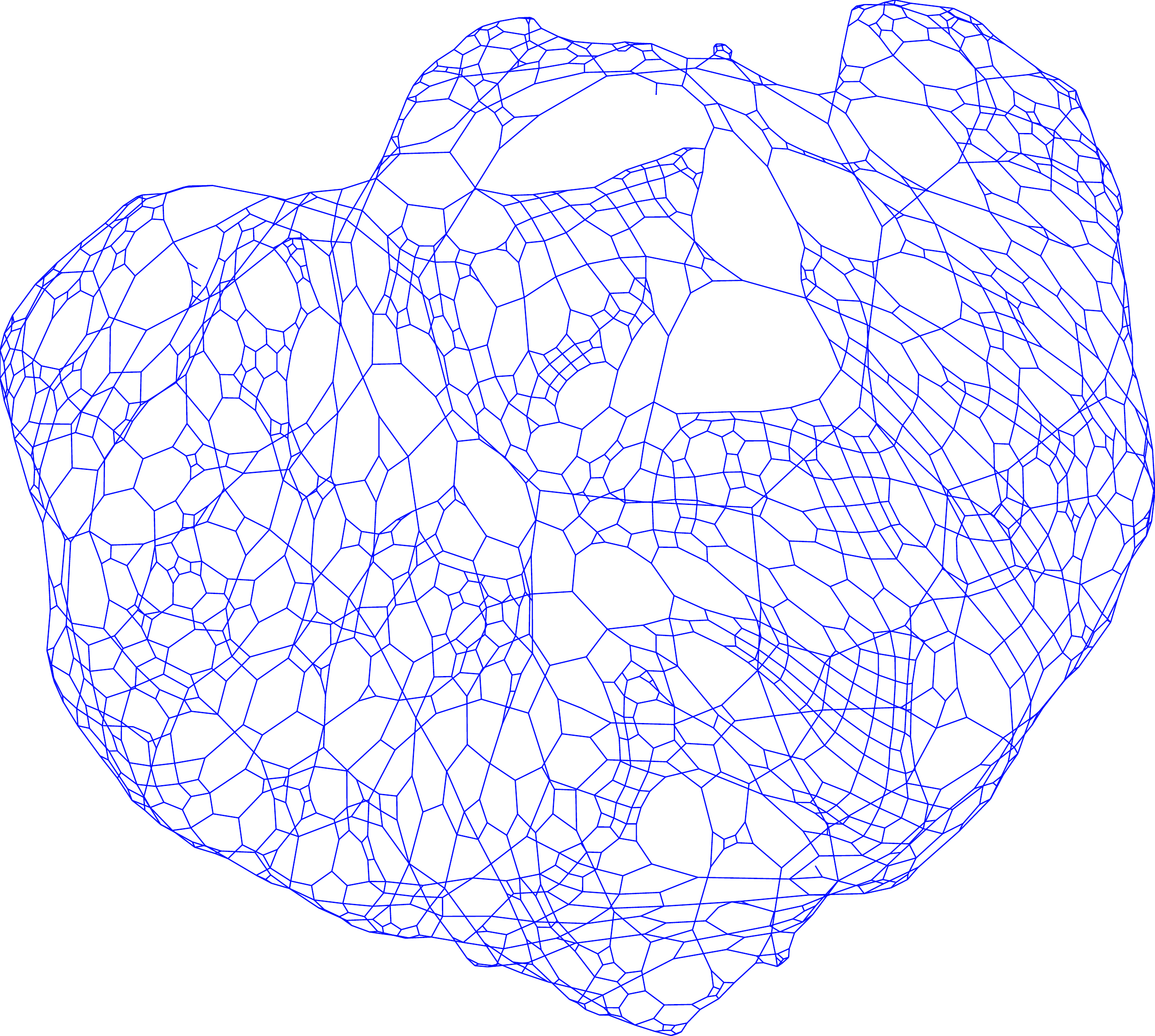}\includegraphics[width=0.55\textwidth]{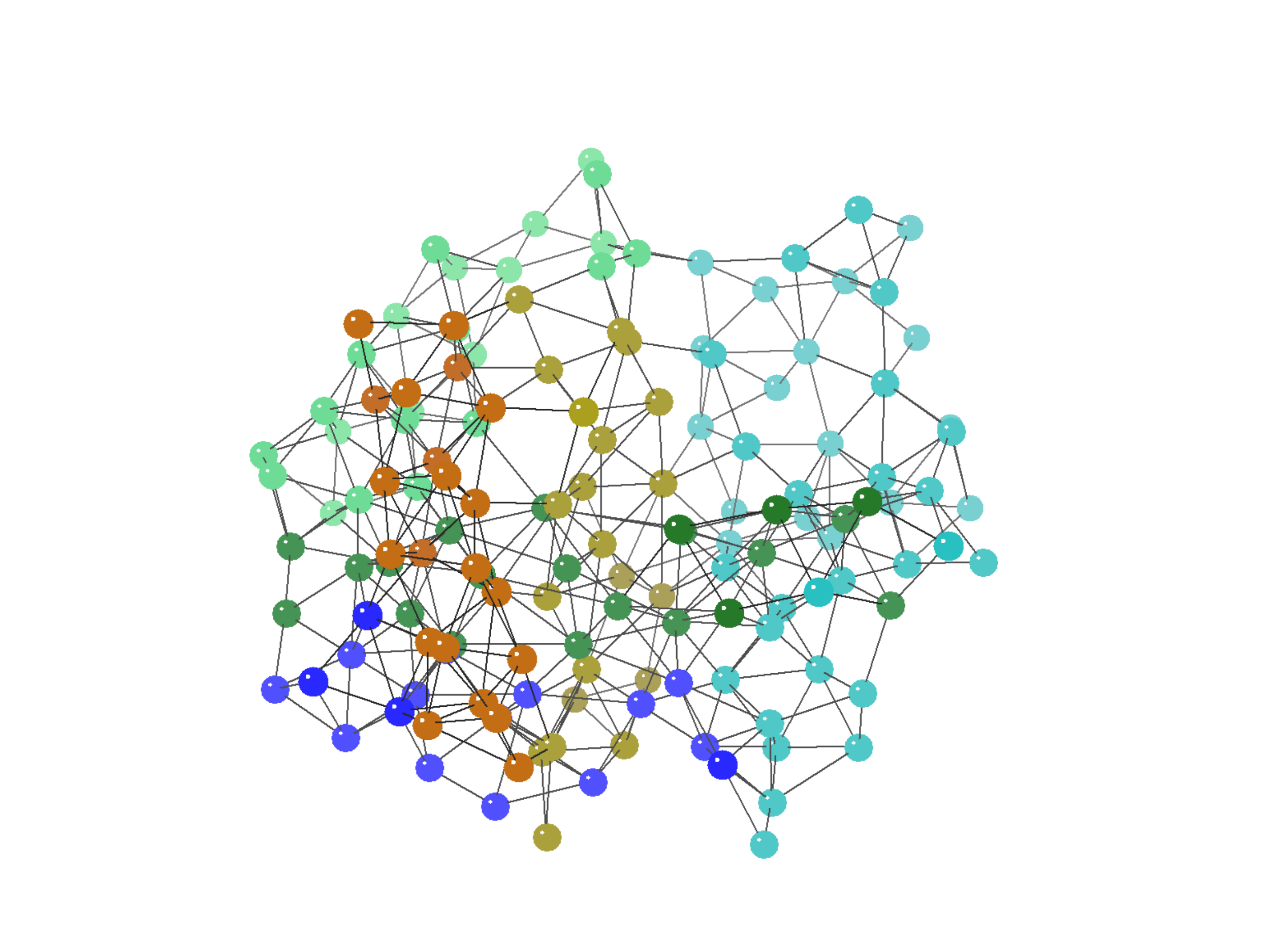}}
\par\end{centering}
\textcolor{black}{\caption{Illustration of the urban street network of the city centres of Bologna
(left) and a protein residue network (right). Both networks contain
chordless cycles (holes) although the first is planar and the second
is nonplanar.}
}

\textcolor{black}{\label{city streets}}
\end{figure}

\textcolor{black}{In this Section we compare the ranking of nodes
using the subgraph centrality based on single and double-factorial
penalization of the walks for a series of urban street networks as
well as protein residue networks. For the urban street networks we
selected 14 cities from around the World as a representative set.
For the protein residue network we selected 14 proteins whose structures
have been obtained from x-ray crystallography and deposited in the
}\textit{\textcolor{black}{Protein Data Bank}}\textcolor{black}{{} (PDB)
\cite{PDB}. Each protein is identified with a unique code of one
number and three lowercase letters. A fourth letter sometimes appears
to designate the name of the chain to which the protein belongs to.
We selected proteins of different domains and sizes ranging from 100
to 1,000 amino acids. In order to compare the ranking based on both
approaches, i.e., single- and double-factorial subgraph centralities,
we use the }\textit{\textcolor{black}{Spearman rank correlation coefficient}}\textcolor{black}{.
This index indicates how correlated are the ranking of the nodes are
according to both indices.}

\textcolor{black}{In Table \ref{real networks} we give the values
of the Spearman rank correlation coefficient for all the networks
analyzed in this work. For the urban street networks the average rank
correlation coefficient is $0.727$ and for the protein residue networks
it is $0.747$. In both cases the rank correlation indicates that
the two indices rank the nodes in significantly different ways. For
the sake of comparison the Spearman correlation coefficient between
the two indices for the network of Internet as an Autonomous System
is $0.9999$. This finding\textemdash that both indices are not highly
rank-correlated\textemdash is very important because it indirectly
indicates that the double-factorial penalization of walks adds some
structural characteristics not described by the single-factorial indices.
It is important to remark that in the urban street networks there
are rank correlation coefficients as low as $0.56$ and as high as
$0.92$. However, in the protein residue networks the rank correlations
are less deviated from the mean. These differences reflect the important
fact that cities are in general more heterogeneous than proteins.
That is, there are cities with clearly defined holes in their structures
while others do not necessarily display such topological features.
However, we have previously shown that 95\% of representative proteins
contain holes indicating that the presence of chordless cycles is
a universal property in these systems \cite{universality}.}

\textcolor{black}{}
\begin{table}
\begin{centering}
\textcolor{black}{}%
\begin{tabular}{|c|>{\centering}p{1cm}|>{\centering}p{2cm}|c|c|c|>{\centering}p{2cm}|}
\hline 
\textcolor{black}{City network} & \textcolor{black}{$n$} & \textcolor{black}{Rank Correlation} &  & \textcolor{black}{PDB} & \textcolor{black}{$n$} & \textcolor{black}{Rank Correlation}\tabularnewline
\hline 
\hline 
\textcolor{black}{Ahmedabad} & \textcolor{black}{4874} & \textcolor{black}{0.6002} &  & \textcolor{black}{1ccr} & \textcolor{black}{111} & \textcolor{black}{0.7296}\tabularnewline
\hline 
\textcolor{black}{Atlanta} & \textcolor{black}{3234} & \textcolor{black}{0.8562} &  & \textcolor{black}{1cpq} & \textcolor{black}{129} & \textcolor{black}{0.7846}\tabularnewline
\hline 
\textcolor{black}{Barcelona} & \textcolor{black}{5575} & \textcolor{black}{0.9151} &  & \textcolor{black}{1berA} & \textcolor{black}{199} & \textcolor{black}{0.8215}\tabularnewline
\hline 
\textcolor{black}{Berlin} & \textcolor{black}{4495} & \textcolor{black}{0.6490} &  & \textcolor{black}{1bpyA} & \textcolor{black}{326} & \textcolor{black}{0.7630}\tabularnewline
\hline 
\textcolor{black}{Bologna} & \textcolor{black}{825} & \textcolor{black}{0.9144} &  & \textcolor{black}{1cem} & \textcolor{black}{363} & \textcolor{black}{0.8387}\tabularnewline
\hline 
\textcolor{black}{Cambridge} & \textcolor{black}{1509} & \textcolor{black}{0.6513} &  & \textcolor{black}{1chm} & \textcolor{black}{401} & \textcolor{black}{0.7196}\tabularnewline
\hline 
\textcolor{black}{Chengkan} & \textcolor{black}{414} & \textcolor{black}{0.7127} &  & \textcolor{black}{1bmfA} & \textcolor{black}{487} & \textcolor{black}{0.7357}\tabularnewline
\hline 
\textcolor{black}{Hong Kong} & \textcolor{black}{916} & \textcolor{black}{0.7417} &  & \textcolor{black}{1ctn} & \textcolor{black}{538} & \textcolor{black}{0.7272}\tabularnewline
\hline 
\textcolor{black}{Mecca} & \textcolor{black}{1464} & \textcolor{black}{0.6943} &  & \textcolor{black}{1aorA} & \textcolor{black}{605} & \textcolor{black}{0.7926}\tabularnewline
\hline 
\textcolor{black}{Milton Keynes} & \textcolor{black}{5581} & \textcolor{black}{0.6463} &  & \textcolor{black}{1cyg} & \textcolor{black}{680} & \textcolor{black}{0.7569}\tabularnewline
\hline 
\textcolor{black}{Oxford} & \textcolor{black}{1622} & \textcolor{black}{0.6951} &  & \textcolor{black}{8acn} & \textcolor{black}{753} & \textcolor{black}{0.7754}\tabularnewline
\hline 
\textcolor{black}{Penang} & \textcolor{black}{7055} & \textcolor{black}{0.5593} &  & \textcolor{black}{1qba} & \textcolor{black}{863} & \textcolor{black}{0.6874}\tabularnewline
\hline 
\textcolor{black}{Rotterdam} & \textcolor{black}{1300} & \textcolor{black}{0.6472} &  & \textcolor{black}{1alo} & \textcolor{black}{908} & \textcolor{black}{0.6494}\tabularnewline
\hline 
\textcolor{black}{Yuliang} & \textcolor{black}{88} & \textcolor{black}{0.8990} &  & \textcolor{black}{1bglA} & \textcolor{black}{1021} & \textcolor{black}{0.6786}\tabularnewline
\hline 
\end{tabular}
\par\end{centering}
\textcolor{black}{\caption{Spearman rank correlation coefficients for the subgraph centrality
obtained with the single and double-factorial penalization of walks.
The results are for the urban street networks (left) and protein residue
networks (right) studies here.}
}

\textcolor{black}{\label{real networks}}
\end{table}

\textcolor{black}{In order to illustrate the differences in the ranking
of nodes in a network using both types of centrality indices we consider
here the urban street network of Cambridge in the UK and the protein
residue network of the protein 1qba. In Figure \ref{Cambridge} we
plot the subgraph centralities of each node in the Cambridge urban
street network (top images) and of the protein residue network (bottom
images) with radius proportional to the logarithm of the subgraph
centrality based on single- (left image) and double-factorial (right
image). The logarithm is used here to avoid be fooled by the very
large numbers of the subgraph centrality in these networks. As can
be seen there are significant differences in the centrality of the
nodes based on both approaches. The main difference is that the centrality
based on the single-factorial identifies fewer hubs than the double-factorial,
which is able to delineate complete regions in the networks. }

\textcolor{black}{}
\begin{figure}
\begin{centering}
\textcolor{black}{\includegraphics[width=0.5\textwidth]{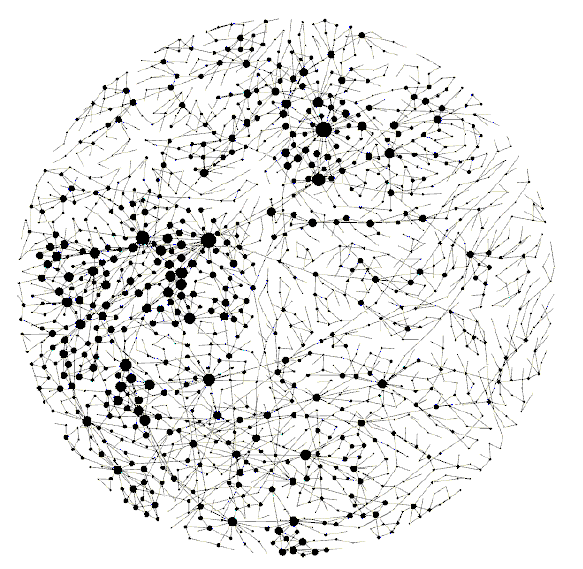}\includegraphics[width=0.5\textwidth]{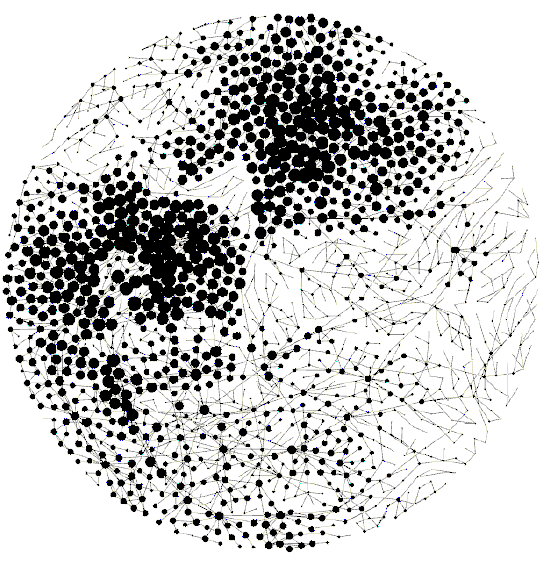}}
\par\end{centering}
\begin{centering}
\textcolor{black}{\includegraphics[width=0.5\textwidth]{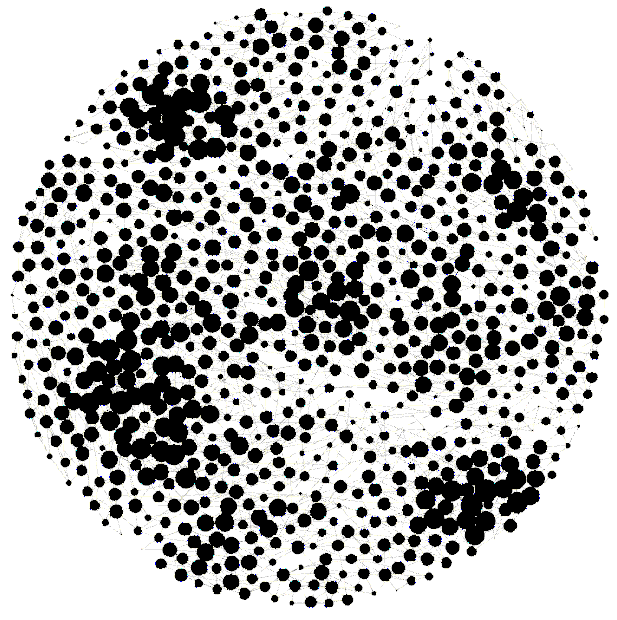}\includegraphics[width=0.5\textwidth]{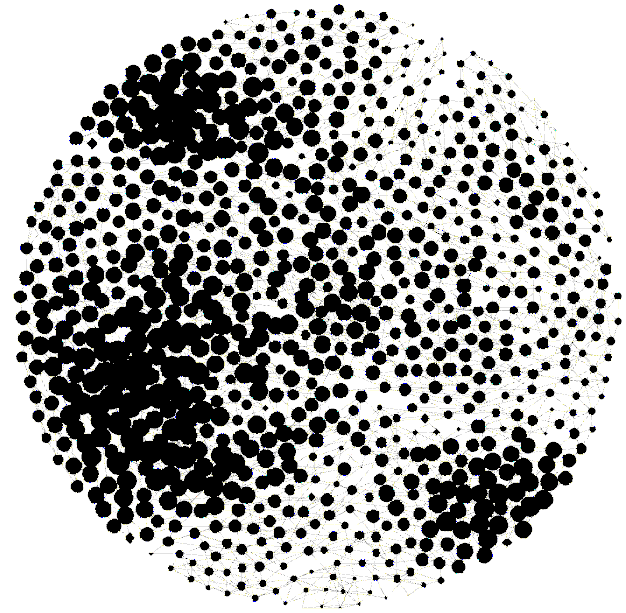}}
\par\end{centering}
\textcolor{black}{\caption{Illustration of the subgraph centrality of nodes in the urban street
network of Cambridge, UK. The radius of the nodes is proportional
to the logarithm of the subgraph centrality based on single- (left
image) and double-factorial (right image) penalization of the walks.}
}

\textcolor{black}{\label{Cambridge}}
\end{figure}

\textcolor{black}{Here we have confirmed what we have first analyzed
in Section 5, that in networks containing holes, the subgraph centrality
and other indices based on walks are significantly different when
the single- or double-factorial penalization are used. In general,
we observe significantly different ranking of the nodes, with the
subgraph centrality based on double-factorial identifying a larger
number of central nodes than the one based on single-factorial penalization.
These findings are important for the analysis of specific network
problems in particular areas of research, as holes mean different
things in different contexts. }

\section{\textcolor{black}{Conclusion}}

\textcolor{black}{We have introduced here a new matrix function for
studying graphs and real-world networks. This new matrix function
of the adjacency matrix of a graph is based on the double-factorial
penalization of walks between nodes in a graph. We have observed here
that there are two groups of networks for which the behavior of the
indices based on the double-factorial penalization changes with respect
to that of the single-factorial one. In the first case we have considered
networks where there are no structural holes, such as the case of
the protein-protein interaction networks of yeast. In this situation
we have observed that by introducing a weighting scheme of the form
$c_{k}=\frac{1}{k!}$ or $c_{k}=\frac{1}{k!!}$ the double factorial
indices produce similar results as the single-factorial one for the
identification of essential proteins in the yeast PIN. }

\textcolor{black}{The second group of networks is formed by those
containing significant chordless cycles or holes in their structures,
such as urban street networks and protein residue networks. In those
cases the contribution of long walks is very important, in particular
for navigating around such long holes in the network. In those cases
we have shown how a centrality index based on single- as well as on
the double-factorial penalization of the walks produce significant
differences in the ranking of the nodes. We should stress that significantly
large chordless cycles are present in a variety of networks and that
their study is of major importance in communication systems, where
they should be avoided as regions of zero-coverage of the communication
signals. Consequently, the new scheme of penalizing walks by using
the double-factorial opens new possibilities for the study of many
problems in real-world networks.}

\section{\textcolor{black}{Acknowledgment}}

\textcolor{black}{EE thanks the Royal Society of London for a Wolfson
Research Merit Award. GS thanks EPRSC for a PhD scholarship. The authors
thank both referees for excellent revision and constructive comments
on the manuscript.}

\end{document}